\documentclass[14pt,dvipsnames]{extarticle}

\usepackage[left=2cm,top=2cm,right=2cm,bottom=20mm]{geometry}
\usepackage[T1]{fontenc}
\usepackage[utf8]{inputenc}
\usepackage[cmintegrals,cmbraces]{newtxmath}
\usepackage[lining]{ebgaramond}
\usepackage{ebgaramond-maths}
\usepackage{euscript}
\usepackage{mathtools}
\makeatletter
\DeclareSymbolFont{ntxletters}{OML}{ntxmi}{m}{it}
\SetSymbolFont{ntxletters}{bold}{OML}{ntxmi}{b}{it}
\re@DeclareMathSymbol{\partial}{\mathord}{ntxletters}{"40}
\re@DeclareMathSymbol{\wp}{\mathord}{ntxletters}{"7D}
\makeatother
\usepackage{graphicx}
\usepackage[figurename=Fig.]{caption}
\usepackage{ifpdf}
\ifpdf
  \usepackage[tracking=true,letterspace=50]{microtype}
  \SetTracking{encoding=T1,shape=sc}{50}
\else
  \usepackage[tracking=false]{microtype}
\fi

\usepackage{amsthm}
\usepackage{array,booktabs}
\usepackage{enumitem}
\usepackage{xcolor}
\usepackage{tocloft}
\usepackage{csquotes}

\usepackage[explicit]{titlesec}
\usepackage{pgfornament}

\titleformat{\section}[block]
  {\normalfont\Large\bfseries\filcenter} 
  {}
  {0pt}
  {\thesection.\hspace{0.45em}#1}

\titleformat{name=\section,numberless}[block]
  {\normalfont\Large\bfseries\filcenter}
  {}
  {0pt}
  {#1}

\titlespacing*{\section}
  {0pt}
  {3ex plus 0.3ex minus 0.2ex}
  {2.5ex}

\usepackage{hyperref}
\usepackage[nameinlink,capitalise,noabbrev]{cleveref}
\usepackage[
  backend=biber,
  style=alphabetic,
  sorting=nyt,
  giveninits=true,
  maxbibnames=99,
  doi=true,
  url=true,
  isbn=false,
  eprint=true
]{biblatex}
\AtBeginBibliography{\normalsize\raggedright\interlinepenalty=10000}
\definecolor{paperred}{RGB}{125,18,18}
\hypersetup{
  colorlinks=true,
  linkcolor={red!50!black},
  citecolor={blue!50!black},
  urlcolor={blue!80!black},
  pdfauthor={Mikhail Kornev},
  pdftitle={n-Valued Analogue of Abel's Theorem}
}

\setlist{nosep}
\allowdisplaybreaks
\theoremstyle{plain}
\newtheorem{theorem}{Theorem}[section]
\newtheorem{proposition}[theorem]{Proposition}
\newtheorem{lemma}[theorem]{Lemma}
\newtheorem{corollary}[theorem]{Corollary}
\theoremstyle{definition}
\newtheorem{definition}[theorem]{Definition}
\newtheorem{example}[theorem]{Example}

\theoremstyle{remark}

\makeatletter
\renewenvironment{proof}[1][\proofname]{
  \par\pushQED{\qed}\normalfont
  \topsep6\p@\@plus6\p@\relax
  \trivlist\item[\hskip\labelsep\bfseries\itshape #1\@addpunct{.}]
}{\popQED\endtrivlist\@endpefalse}
\makeatother

\let\phi\varphi

\DeclareMathOperator{\Sym}{Sym}
\DeclareMathOperator{\Tr}{Tr}
\DeclareMathOperator{\Res}{Res}
\DeclareMathOperator{\Disc}{Disc}
\DeclareMathOperator{\Spec}{Spec}
\DeclareMathOperator{\Frac}{Frac}
\DeclareMathOperator{\ord}{ord}
\DeclareMathOperator{\id}{id}
\DeclareMathOperator{\mult}{mult}

\newcommand{\Pp}{\mathbb P}
\newcommand{\kfield}{\Bbbk}
\newcommand{\Diff}{\kfield(t)\langle\partial_t\rangle}
\newcommand{\multiset}[1]{\mathopen{[}#1\mathclose{]}}
\newcommand{\dd}{\mathop{}\!\mathrm d}

\newcommand{\cT}{\mathcal T}
\newcommand{\cL}{\mathcal L}
\newcommand{\cR}{\mathcal R}
\newcommand{\muN}{\boldsymbol{\mu}}

\title{\bfseries \(n\)-Valued Analogue of Abel's Theorem}
\author{Mikhail Kornev}
\date{}

\renewbibmacro{in:}{}

\begin{document}

\maketitle

\begin{abstract}
Abel's classical theorem shows that a continuously differentiable one-dimensional associative law with a two-sided unit is locally equivalent to addition under a change of coordinate.  We prove an
\(n\)-valued analogue in the algebraic and formal settings.  Over a field of characteristic zero, every associative
algebraic \(n\)-valued monoid on \(\mathbb P^1\) with a strong two-sided unit is
commutative.  Over a reduced commutative \(\mathbb Q\)-algebra, any one-dimensional formal \(n\)-valued group is commutative.  We then classify \(n\)-valued
formal groups over arbitrary \(\mathbb Q\)-algebras that are integral domains: for any \(n\)-valued formal group there is a unique divisor \(m\) of \(n\) such that the law is strongly isomorphic to
the \(n/m\)-fold diagonal of the cyclic elementary \(m\)-valued law \(\mathbb
G_m\). This proves Kholodov's classification conjecture over integral domain \(\mathbb Q\)-algebras.
\end{abstract}



\tableofcontents

\thispagestyle{empty}

\newpage

\section{Introduction}\label{sec:introduction}

Starting from addition \(F(x,y)=x+y\) and multiplication \(F(x,y)=xy\),
Abel asked for all functions \(F\) for which \(F(z,F(x,y))\) is symmetric
in \(x,y,z\) \parencite[p.~11]{Abel1826}. Thus he sought laws with the
composition property of addition. In the locally nondegenerate continuously
differentiable setting (meaning that both first partial derivatives of \(F\)
are nonzero throughout a suitable local domain), his argument produces a locally
invertible function \(g\) such that
\[
 g(F(x,y))=g(x)+g(y).
\]

\begin{figure}[ht]
\centering
\includegraphics[width=0.96\textwidth]{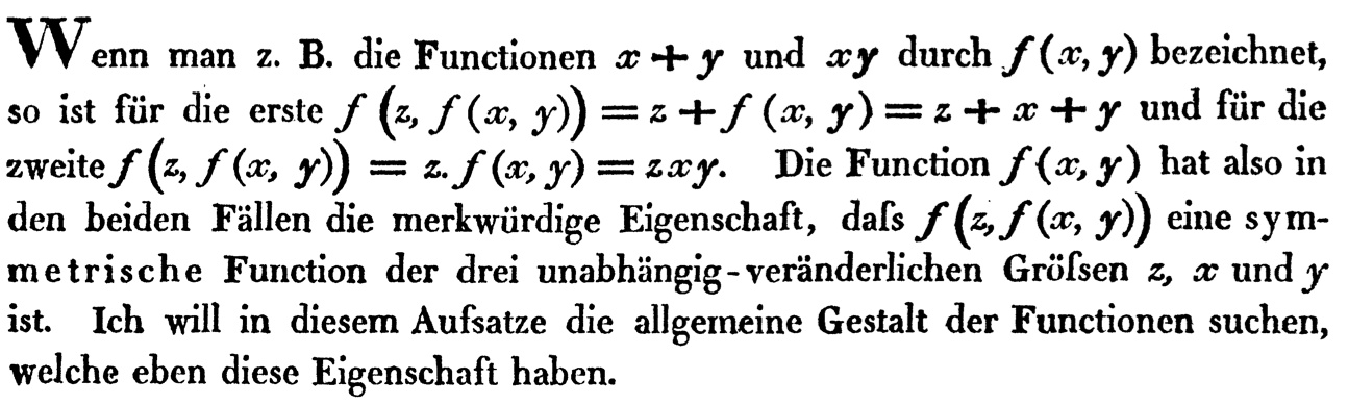}
\caption{The opening paragraph of Abel's 1826 paper \cite{Abel1826}.}
\label{fig:abel-opening}
\end{figure}

Writing \(f=g^{-1}\), the identity becomes
\(F(f(u),f(v))=f(u+v)\).
In the complex analytic setting, an algebraic law \(F\) therefore gives an
\emph{algebraic addition theorem} for \(f\):
\[
 \Phi(f(u),f(v),f(u+v))=0
\] for some nonzero \(\Phi\in\mathbb C[X,Y,Z]\).
Weierstrass proved that every nonconstant analytic function with such an
addition theorem is algebraic over one of
\(\mathbb C(u)\), \(\mathbb C(e^{cu})\) (where \(c\ne0\)), or
\(\mathbb C(\wp(u;\Lambda))\), where \(\Lambda\) is a lattice in
\(\mathbb C\) \parencite[Art.~1, p.~2]{WeierstrassSchwarz1885};
see also \parencite[p.~33]{Phragmen1885}.
This relates Abel's change of coordinate to the algebraic, exponential,
and elliptic addition theorems.

The characteristic-zero theorem for one-dimensional formal group laws is the
formal version of the same mechanism. If
\(a(t)=\partial_yF(t,0)\), differentiation of associativity at the unit gives
\[
 a(F(x,y))=\partial_yF(x,y)a(y).
\]
Since \(a(0)=1\), there is a unique formal series \(g\) with
\(g(0)=0\) and \(g'(t)=a(t)^{-1}\). The displayed identity gives
\(\partial_y g(F(x,y))=g'(y)\), so the unit identity \(F(x,0)=x\) yields
\[
 g(F(x,y))=g(x)+g(y).
\]
Thus \(g\) is the formal logarithm, and commutativity follows from
associativity and the two unit identities; see \cite{Hazewinkel1978}.

Recall that an \(n\)-valued multiplication on a space \(X\) assigns to \((x,y)\) an
unordered \(n\)-tuple, equivalently an \(n\)-multiset,
\[
 x*y=\multiset{z_1,\ldots,z_n}\in \Sym^n(X).
\]
Associativity means equality of the two flattened \(n^2\)-multisets obtained
from \((x*y)*z\) and \(x*(y*z)\). The identity and inverse axioms are stated
in the same language. This theory originated in algebraic topology in work of
Buchstaber and Novikov. Its algebraic apparatus was subsequently developed in
terms of coalgebras, generalized shifts, Frobenius \(n\)-homomorphisms, and
\(n\)-Hopf algebras
\parencite{BuchstaberNovikov1971,Buchstaber1975,BuchstaberRees1997,BuchstaberRees1998,BuchstaberRees2002,Buchstaber2006}.

The elliptic case gives a concrete link with multivalued laws. Identifying
\(u\) and \(-u\) on the elliptic curve \(\mathbb C/\Lambda\) gives a
two-valued law on \(\mathbb P^1\):
\[
 x*y=\multiset{\wp(u+v),\wp(u-v)},
 \qquad x=\wp(u),\quad y=\wp(v).
\]
The two outputs account for the choice of signs of the lifts, and the
strong unit is \(\infty\)
\parencite[\S~4.2]{BuchstaberGaiurRubtsov2026}.

In the classical commutative theory of two-valued formal groups, Buchstaber
introduced logarithms and exponentials. An \(n\)-valued formal law is called
\emph{elementary} if its monic defining polynomial \(P(Z; x, y)\) is homogeneous of total
degree \(n\) in \(Z,x,y\). When a series \(B=x+O(x^2)\) (called a strong change of coordinate) identifies a formal
multivalued law \(P(Z; x, y)\) with an elementary one, \(B\) is called its {\it logarithm} and
the inverse series \(B^{-1}\) its {\it exponential}. Buchstaber obtained a complete
classification of two-valued formal groups over \(\mathbb Q\)-algebras,
including the additional normal forms that occur over nonreduced \(\mathbb Q\)-algebras.
Over an integral domain \(\mathbb Q\)-algebra, he proved that every
two-valued formal group is strongly isomorphic to either the
cyclic elementary two-valued law or the two-fold diagonal of addition
\parencite[Theorem~6.4]{Buchstaber1975}; see also \cite{Buchstaber1976}.
Kholodov later extended the
method of generalized shift operators to commutative \(n\)-valued formal groups and proved that a
formal group is uniquely reconstructed from its infinitesimal generator. Over
a \(\mathbb Q\)-algebra without zero divisors, he classified three-valued
formal groups up to strong isomorphism. He also classified elementary
four- and five-valued formal groups. Finally, he proved that, for every
\(n\leq 11\), there are only finitely many elementary \(n\)-valued formal
groups. In a later paper, he established the divisor classification for
elementary formal groups of orders \(6\), \(7\), and \(8\)
\parencite[Theorem~7]{Kholodov1984}.
Kholodov conjectured that the elementary \(n\)-valued formal groups
are precisely the \((n/m)\)-fold diagonals of the cyclic elementary
\(m\)-valued groups, where \(m\mid n\)
\parencite[Conjecture~6.4]{Kholodov1982}. Our classification theorem below
proves this conjecture over an integral domain \(\mathbb Q\)-algebra and,
more generally, shows that every associative one-dimensional formal law
with a strong unit over such a base is strongly isomorphic to one of these laws.

Gaifullin showed that every involutive two-valued group---one satisfying
\(e\in x*y\) if and only if \(x=y\), where \(e\) is the unit---is commutative
\cite{Gaifullin_2-val_commutative}.

The axioms for an abstract \(n\)-valued group do not imply commutativity: an
ordinary noncommutative group gives a noncommutative \(n\)-valued group simply
by repeating its product \(n\) times.  There are also genuinely multivalued,
non-diagonal examples.  The following example, communicated by Alexander
Levine, is two-valued, non-reversible (its inverse relation is not
single-valued, in the terminology of \cite{Kornev2026OrderThree}), and
non-coset. Let \(X=\{0,1,2,3,4,5\}\).
In \cref{tab:levine}, the entry in row \(x\) and column \(y\) is \(x*y\).
The brackets denote a multiset, so \([i,i]\) is a double point.

\begin{table}[ht]
\centering
\small
\setlength{\tabcolsep}{5.2pt}
\renewcommand{\arraystretch}{1.16}
\begin{tabular}{c|cccccc}
\toprule
\(*\) & \(0\) & \(1\) & \(2\) & \(3\) & \(4\) & \(5\) \\
\midrule
\(0\) & \([0,0]\) & \([1,1]\) & \([2,2]\) & \([3,3]\) & \([4,4]\) & \([5,5]\) \\
\(1\) & \([1,1]\) & \([3,5]\) & \([0,3]\) & \([4,4]\) & \([0,5]\) & \([2,2]\) \\
\(2\) & \([2,2]\) & \([0,5]\) & \([3,5]\) & \([1,1]\) & \([0,3]\) & \([4,4]\) \\
\(3\) & \([3,3]\) & \([2,2]\) & \([4,4]\) & \([5,5]\) & \([1,1]\) & \([0,0]\) \\
\(4\) & \([4,4]\) & \([0,3]\) & \([0,5]\) & \([2,2]\) & \([3,5]\) & \([1,1]\) \\
\(5\) & \([5,5]\) & \([4,4]\) & \([1,1]\) & \([0,0]\) & \([2,2]\) & \([3,3]\) \\
\bottomrule
\end{tabular}
\caption{A noncommutative nondiagonal two-valued group on six elements.}
\label{tab:levine}
\end{table}

The element \(0\) is a strong unit, and a direct computer check of the \(6^3\)
triples verifies associativity. The multiplication admits more than one
inverse map. If
\(\operatorname{Inv}(x)\) denotes the set of all \(y\) for which both
\(x*y\) and \(y*x\) contain \(0\), then the table gives
\[
 \begin{gathered}
 \operatorname{Inv}(0)=\{0\},\qquad
 \operatorname{Inv}(1)=\{2,4\},\qquad
 \operatorname{Inv}(2)=\{1,4\},\\
 \operatorname{Inv}(3)=\{5\},\qquad
 \operatorname{Inv}(4)=\{1,2\},\qquad
 \operatorname{Inv}(5)=\{3\}.
 \end{gathered}
\]
The three two-element choices give eight inverse maps. Since
every coset \(n\)-valued group is reversible, it is also non-coset. It
is noncommutative because
\[
 1*2=[0,3],
 \qquad
 2*1=[0,5].
\]

The example shows that even a strong unit does not force commutativity in
the category of sets. It turns out that introducing geometric structures on \(n\)-valued groups changes the situation drastically.

In the setting of \(n\)-valued algebraic or formal groups, Buchstaber (for the \(n=2\) case) and Kholodov (for general \(n\)) noted that the normalized
trace is the correct object to work with. It defines a rational comultiplication
in the algebraic setting, a completed coalgebra in the formal setting, and two generalized
shift families \(\cL_a\) and \(\cR_b\). Associativity gives the mixed
commutation relation
\[
 \cL_a\cR_b=\cR_b\cL_a.
\]
On a one-dimensional algebraic or formal chart, the infinitesimal coefficients
of these families are ordinary differential operators. For our results, the main point is the Schur--Amitsur theorem: a non-scalar coefficient in the decomposition of \(\cR_b\) has a
commutative centralizer, so all coefficients in \(\cL_a\), and hence all left shifts,
commute. Evaluation at the strong unit converts this operator identity into
equality of power sums, and Newton's identities recover the multivalued
multiplication itself. In this way an associativity problem for algebraic
correspondences is reduced to the rigidity of centralizers of ordinary
differential operators.

The algebraic version of this \(n\)-valued Abel theorem is as follows.

\begin{theorem}[Commutativity of \(n\)-valued monoids on \(\Pp^1_{\kfield}\)]\label{thm:algebraic-commutativity}
Let \(\kfield\) be a field of characteristic zero. Every associative
algebraic \(n\)-valued monoid on \(\Pp^1_{\kfield}\) with a strong two-sided
unit is commutative.
\end{theorem}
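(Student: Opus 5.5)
We follow the strategy sketched in the introduction: pass from the multiset-valued law to its normalized trace and study the induced shift operators. Let \(x*y\) be given on an affine chart \(\mathbb A^1\subset\Pp^1_{\kfield}\) containing the strong unit \(e\). After a Möbius change of coordinate we may take \(e=0\). The law is defined by a polynomial \(P(Z;x,y)=Z^n-\sigma_1(x,y)Z^{n-1}+\cdots\) whose coefficients \(\sigma_i\) are rational functions on \(\Pp^1\times\Pp^1\).

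\textbf{Shift operators.} For a rational function \(f\) in one variable \(t\), set
\[
 (\cT f)(x,y)=\tfrac1n\sum_{z\in x*y} f(z).
\]
This is a symmetric function of the roots of \(P\), hence rational in \(x,y\), and it defines a rational comultiplication. Put \(\cL_x f(y)=\cR_y f(x)=(\cT f)(x,y)\). Associativity of the flattened \(n^2\)-multisets gives \(\cT\) coassociativity, i.e.\ \(\cL_a\cR_b=\cR_b\cL_a\) on \(\kfield(t)\). The strong two-sided unit gives \(0*y=y*0=\multiset{y,\ldots,y}\), so \(\cL_0=\cR_0=\id\).

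\textbf{Expansion at the unit.} Expand the shifts in the parameter near \(0\):
\[
 \cR_b=\sum_{k\ge0} b^k R_k,\qquad \cL_a=\sum_{k\ge0} a^k L_k .
\]
Here \(R_0=L_0=\id\). Each \(L_k\) and \(R_k\) is an ordinary differential operator in \(\Diff\) of order at most \(k\). To see this, expand \(\frac1n\sum f(z_i)\) with \(z_i=y+\) a Puiseux series in \(a\). The branches may be ramified, but after averaging the sum becomes a power series in \(a\) whose coefficients are polynomial expressions in the derivatives of \(f\), with coefficients rational in \(y\). Making this rigorous, in particular the rationality of these coefficients and the behaviour at \(\infty\), is the first technical point. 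One way is to compute the Taylor coefficients of the rational function \(\cT f(a,y)\) in \(a\) at \(a=0\) via residues. The mixed relation then gives \([L_j,R_k]=0\) for all \(j,k\).

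\textbf{Schur--Amitsur step and conclusion.} Suppose first that some \(R_k\) has positive order, and pick one such \(R_{k_0}\). By the Schur--Amitsur theorem its centralizer in \(\Diff\) is commutative. Every \(L_j\) and every \(R_k\) lies in this centralizer, so they all commute pairwise. Hence \([\cL_a,\cL_{a'}]=0\) as formal series, and likewise with \(\cR\) in place of \(\cL\). Also \(L_j\) commutes with \(R_k\).

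To conclude, we want \(\cL_a=\cR_a\). Apply \(\cL_a\cR_b f=\cR_b\cL_a f\) and evaluate at the unit \(t=0\). Using \(\cL_0=\cR_0=\id\), we get \((\cR_b f)(a)=\cT f(a,b)\) and \((\cL_a f)(b)=\cT f(a,b)\). A cleaner route is to take \(f=t^m\) and compare the coefficients of the power sums \(p_m(a,b)=\sum_{z\in a*b} z^m\) against \(p_m(b,a)\). The identity \(\cT f(a,b)=\cT f(b,a)\) should follow from commutativity of the left shifts: \(\cL_a\cL_b f\) evaluated at \(0\) equals \((\cL_b f)(a)\), which is \(\cT f(b,a)\).

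Indeed, \(\cL_a\cL_b f(0)=\cL_b\cL_a f(0)\), with \((\cL_a g)(0)=\cT g(a,0)=g(a)\) by the unit. This gives \(\cT f(b,a)=\cT f(a,b)\) for all \(f\). Hence \(p_m(a,b)=p_m(b,a)\) for all \(m\le n\), and Newton's identities (valid in characteristic zero) give \(\sigma_i(a,b)=\sigma_i(b,a)\), i.e.\ \(a*b=b*a\). The identities hold first as formal series at \((0,0)\) and then globally, since both sides are rational.

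\textbf{Degenerate case.} It remains to treat the case where every \(R_k\) is scalar, i.e.\ multiplication by some \(r_k(t)\in\kfield(t)\). Applying \(\cR_b\) to the constant \(1\) gives \(1\), so \(r_k=0\) for \(k\ge1\). Then \(\cR_b=\id\), which means \(\cT f(a,b)=f(a)\) for every \(f\), so \(a*b=\multiset{a,\ldots,a}\). But \(0*b=\multiset{b,\ldots,b}\), a contradiction for \(b\neq 0\). So this case cannot occur.

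The main obstacle is the second step: showing rigorously that the infinitesimal coefficients of the shift families are differential operators with coefficients in \(\kfield(t)\), so that Schur--Amitsur applies, despite possible ramification of the branches of \(x*y\) near the unit.
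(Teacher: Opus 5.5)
Your outline is the paper's proof: trace shifts, $\cL_a\cR_b=\cR_b\cL_a$ from associativity, expansion at the unit into differential operators, Schur--Amitsur applied to a non-scalar right coefficient, then evaluation at the unit and Newton's identities. Your degenerate case (order-zero $R_k$ vanish for $k\ge1$ because $\cR_b1=1$, forcing $\cR_b=\id$) is equivalent to the paper's use of $H(0,b)=b$ to produce $E_{s_0}\neq0$ with $E_{s_0}1=0$.

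The step you leave open is exactly \cref{prop:trace-differential}, and it needs neither Puiseux series nor residues. The map is defined at the generic points of the unit divisors and lands in the affine chart there. Hence the monic coefficients lie in $\kfield(t)[[a]]$, which settles your concern about $\infty$. In $\mathcal A_k=\bigl(\kfield(t)[a]/(a^{k+1})\bigr)[Z]/(\widehat P(Z;a,t))$, the unit identity gives $\delta^n\in a\mathcal A_k$ for $\delta=Z-t$, hence $\delta^{n(k+1)}=0$. Taylor's formula $f(t+\delta)=\sum_{j<n(k+1)}f^{(j)}(t)\delta^j/j!$ is then finite. For $f=g/h$ this still works, because $h(t+\delta)$ is a unit when $\delta$ is nilpotent. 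Taking $\frac1n\Tr$ yields $\sum_j c_j(a,t)f^{(j)}(t)$ with $c_j=\frac{1}{n\,j!}\Tr(\mult_{\delta^j})\in\kfield(t)[a]/(a^{k+1})$. Your Puiseux sketch gives the same formula once you note two facts. First, $\frac1n\sum_i\delta_i^j$ is a power sum of the roots of $\widehat P(t+\delta;a,t)$, so it lies in $\kfield(t)[[a]]$. Second, it has $a$-order at least $j/n$.

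There are also three corrections. Your bound ``order at most $k$'' is false. For $\mathbb G_2$, $\cL_af(t)=\frac12\bigl(f(a+t+2\sqrt{at})+f(a+t-2\sqrt{at})\bigr)$ has $a$-coefficient $f'(t)+2tf''(t)$, of order $2$. In general the $a^k$-coefficient has order at most $nk$ (the paper records $n(k+1)-1$). This is harmless, since only membership in $\Diff$ is used. Next, the mixed relation places only the $L_j$ in the centralizer of $R_{k_0}$, not the other $R_k$; you never need the latter. Finally, as the paper does, lift the formal identity $[\cL_a,\cL_b]=0$ to an identity of rational functions of $(a,b,t)$ before setting $t=0$. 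The coefficients of the $L_k$ are not known to be regular at $t=0$.
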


No inverse axiom is required. The strong multiplicity at the unit is essential but
weak units (in the sense of \cref{def:weak-unit}) do not suffice. Associativity in the presence of a weak unit does not imply commutativity,
even for a weakly unital algebraic two-valued law on the affine line. Over any
field of characteristic different from two, set
\begin{equation}\label{eq:weak-unit-counterexample}
 x*y=[x+y,x-y].
\end{equation}
Both bracketings of a triple give
\[
 [x+y+z,x+y-z,x-y+z,x-y-z],
\]
so the law is associative. It is not commutative for generic \(x,y\). Since
\(x*0=[x,x]\) and \(0*x=[x,-x]\), the point \(0\) is a weak unit but not a
strong unit. Thus the full multiplicity condition at the identity is
essential. In Buchstaber's standard terminology the identity axiom is strong,
so \eqref{eq:weak-unit-counterexample} is not a two-valued monoid or group
\parencite{Buchstaber2006,BuchstaberKornev2025Wendt}. Further investigations of weak units are collected in \cref{sec:weak-units}.

The same local operator argument gives commutativity over every
characteristic-zero field in the setting of formal \(n\)-valued groups. The passage from fields to a
general coefficient ring is controlled by the nilradical. As we will see, the symmetry identity \(P(Z;x,y)=P(Z;y,x)\) can
be tested after reduction modulo every prime ideal and passage to the fraction
field. In particular, this equality descends when the intersection of the prime ideals is zero.
This yields the formal version of the same \(n\)-valued Abel theorem:

\begin{theorem}[Commutativity of \(n\)-valued formal groups over a reduced base]\label{thm:formal-commutativity}
Let \(R\) be a reduced commutative \(\mathbb Q\)-algebra. Let
\[
 P(Z;x,y)=Z^n-\theta_1(x,y)Z^{n-1}+\cdots+(-1)^n\theta_n(x,y)
 \in R[[x,y]][Z]
\]
be an associative one-dimensional formal \(n\)-valued multiplication with
strong two-sided unit
\[
 P(Z;x,0)=(Z-x)^n,
 \qquad
 P(Z;0,y)=(Z-y)^n.
\]
Then
\[
 P(Z;x,y)=P(Z;y,x),
\] i.e., the \(n\)-valued multiplication is commutative.
Under the same associativity and unit hypotheses over an arbitrary commutative
\(\mathbb Q\)-algebra, the coefficients of \(P(Z;x,y)-P(Z;y,x)\) belong to
the nilradical of \(R\).
\end{theorem}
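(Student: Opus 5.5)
We first reduce to the case of a field, and then run the generalized-shift/centralizer argument sketched in the introduction. Let \(D=P(Z;x,y)-P(Z;y,x)\). Its coefficients are the series \(\theta_i(x,y)-\theta_i(y,x)\in R[[x,y]]\). For each prime \(\mathfrak p\subset R\), consider the composite \(R\to R/\mathfrak p\to K_{\mathfrak p}=\Frac(R/\mathfrak p)\). It induces \(R[[x,y]]\to K_{\mathfrak p}[[x,y]]\), and the image of \(P\) is again an associative formal \(n\)-valued multiplication with strong two-sided unit, because both axioms are polynomial identities in the coefficients. Suppose the field case gives symmetry over every \(K_{\mathfrak p}\). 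Then every coefficient of every \(\theta_i(x,y)-\theta_i(y,x)\) lies in \(\bigcap_{\mathfrak p}\mathfrak p\), which is the nilradical. This proves the last assertion, and in the reduced case it forces \(D=0\). It therefore suffices to treat \(R=\kfield\), a field of characteristic zero.

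Over \(\kfield\), we work with the normalized trace. For a test series \(f(t)\in\kfield[[t]]\), set
\[
 (\cL f)(x,y)=\tfrac1n\sum_{z\in x*y} f(z).
\]
This expression is a formal series in \(x,y\). Indeed, by Newton's identities the power sums \(\sum z^k\) are polynomials in \(\theta_1,\ldots,\theta_n\), so the sum is well defined even though the individual roots \(z\) are not. Fixing \(x=a\) defines \(\cL_a\), and fixing \(y=b\) defines \(\cR_b\). Associativity, the equality of the flattened \(n^2\)-multisets, is equivalent to the identity \(\cL_a\cR_b=\cR_b\cL_a\), tested on \(f=t^k\). Expand \(\cR_b=\sum_{j\ge0} b^j R_j\) and \(\cL_a=\sum_i a^i L_i\) in the formal parameter. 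The strong unit gives \(R_0=L_0=\id\). The coefficients \(R_j\) and \(L_i\) are ordinary differential operators in \(\kfield[[t]]\langle\partial_t\rangle\), of order at most \(j\) (resp. \(i\)); this follows from Taylor expanding \(f\) around the root near \(t\). Comparing coefficients of \(a^ib^j\) then gives \([L_i,R_j]=0\) for all \(i,j\).

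If all \(R_j\) are scalar multiples of the identity, the multiplication is trivial and commutativity is immediate. For example, \(R_1=\sigma(t)\partial_t\) with \(\sigma=\partial_y\theta_1(t,0)/n\) vanishes only in degenerate cases, which must be handled separately by taking the first non-scalar \(R_{j_0}\). Otherwise, pick a non-scalar \(R_{j_0}\). By the Schur--Amitsur theorem, its centralizer in the ring of differential operators over \(\kfield((t))\) is commutative. Every \(L_i\) lies in this centralizer, and so does every \(R_j\) that commutes with \(R_{j_0}\). All \(R_j\) commute with each other because \(\cR_b\cR_{b'}\) is symmetric in \(b,b'\) by associativity applied to the triple \((t,b,b')\). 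Hence the \(L_i\) and \(R_j\) all lie in one commutative algebra, and the \(L_i\) commute among themselves, so \(\cL_a\cL_{a'}=\cL_{a'}\cL_a\).

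Next we evaluate at the unit \(t=0\) on \(f=t^k\). Since \(\cL_a\cL_{a'}f(0)=\cL_a f(a')\), this says that the power sums of \(a*a'\) and \(a'*a\) agree. Newton's identities, valid in characteristic zero, then give \(\theta_i(a,a')=\theta_i(a',a)\).

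The main obstacle is the step that puts the \(L_i\) into the commutative centralizer. Schur--Amitsur is stated over a differential field with constant field \(\kfield\), so we must pass from \(\kfield[[t]]\) to \(\kfield((t))\), whose constants are \(\kfield\). We must also check that the set of operators commuting with a fixed non-scalar \(R_{j_0}\) is exactly the commutative centralizer. This requires that all the \(L_i\) commute with that one \(R_{j_0}\), which they do. If \(R_1\) vanishes identically, I would take the lowest-index non-scalar \(R_j\) and argue the same way.
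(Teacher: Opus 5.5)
Your route is the paper's: reduction to the residue fields \(\kappa(\mathfrak p)\) with the nilradical as \(\bigcap\mathfrak p\), then the coefficients of \(\cL_a,\cR_b\) as mutually commuting differential operators, the Schur--Amitsur theorem in \(\kfield((t))\langle\partial_t\rangle\), evaluation at the unit, and Newton's identities.

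The step everything hinges on, however, is not established. You need a non-scalar \(R_{j_0}\). You suggest \(R_1\) might vanish, and you dispose of the case where all \(R_j\) are scalar by saying the multiplication is then trivial and commutativity is immediate. That reasoning is wrong. Since \(\cR_b1=1\), every \(R_j\) with \(j\ge1\) kills \(1\), so a scalar \(R_j\) is \(0\). If all were scalar, then \(\cR_b=\id\), i.e.\ \(t*b=\multiset{t,\ldots,t}\), and such a law is \emph{not} commutative. The case is excluded only by the left unit, and that is the missing observation. The identity \(0*b=\multiset{b,\ldots,b}\) gives \((\cR_bt)(0)=b\), hence \((R_1t)(0)=1\), while \(R_11=0\). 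So \(R_1\) is always non-scalar; even in your formula, \(\sigma(0)=\partial_y\theta_1(0,0)/n=1\). This is exactly how the paper proceeds, with no case distinction.

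Two further claims are false, though the proof does not actually need them.

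First, the coefficients do not have order at most \(j\), and \(R_1\ne\sigma(t)\partial_t\) in general. The branches are ramified in \(b\): for \(\mathbb G_2\), \(t*b=\multiset{t+b+2\sqrt{tb},\,t+b-2\sqrt{tb}}\), and one computes \(R_1=\partial_t+2t\partial_t^2\). The correct mechanism is that \(\delta=Z-t\) satisfies \(\delta^n\in b\mathcal A\), hence \(\delta^{n(j+1)}=0\) modulo \(b^{j+1}\). The Taylor expansion is therefore finite, and \(R_j\) has order at most \(n(j+1)-1\). This nilpotence is what actually shows that each \(R_j\) and \(L_i\) is a finite-order operator, which Schur--Amitsur requires; "Taylor expanding around the root near \(t\)" does not by itself give it.

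Second, the claim that all \(R_j\) commute because \(\cR_b\cR_{b'}\) is symmetric by associativity is circular. The composite \(\cR_b\cR_{b'}\) computes \((t*b)*b'=t*(b*b')\), and symmetry in \(b,b'\) is precisely the commutativity you are proving. Drop it: pairwise commutation of the \(L_i\) follows directly from their lying in the commutative centralizer of \(R_1\).

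Finally, a minor slip: \((\cL_a\cL_{a'}f)(0)=(\cL_{a'}f)(a)\), not \((\cL_af)(a')\). This is harmless after interchanging \(a\) and \(a'\).
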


Reducedness of the coefficient ring is what permits fieldwise commutativity to
descend. The classification does not require \(P\) to be generically squarefree: over a
domain we apply the branch argument to the squarefree support and retain each
root's multiplicity.  Combining \cref{thm:formal-commutativity} with
Buchstaber--Kholodov's finite-order generator theory then yields the full
divisor classification.

A basic example in the theories of algebraic and formal \(n\)-valued groups is the cyclic elementary \(n\)-valued group \(\mathbb G_n\), whose multiplication is encoded by the polynomial
\[
p_n(Z;x,y)
={}
\prod_{j=0}^{n-1}
\left(
Z-
\left(\sqrt[n]{x}+\zeta_n^j\sqrt[n]{y}\right)^n
\right)
\in \mathbb Z[Z,x,y],
\]
where \(\zeta_n\) is a primitive \(n\)-th root of unity. Thus the \(n\) values of \(x*_{\mathbb G_n}y\) are
\[
\left(\sqrt[n]{x}+\zeta_n^j\sqrt[n]{y}\right)^n,
\qquad j=0,\ldots,n-1.
\]Although this description uses \(n\)-th roots, the polynomial \(p_n(Z;x,y)\) has integral coefficients and therefore defines the law intrinsically. The algebraic and arithmetic properties of these groups and their relations with the theory of discriminants and the theory of projective duality were studied by Buchstaber and Kornev \cite{BuchstaberKornev2025Wendt, BuchstaberKornev2025CP1}. 

\begin{theorem}[Classification of \(n\)-valued formal groups over an integral domain \(\mathbb Q\)-algebra]\label{thm:cyclicity}
Let \(R\) be a commutative \(\mathbb Q\)-algebra which is an integral domain,
and let \(P(Z;x,y)\in R[[x,y]][Z]\) define an associative one-dimensional
formal \(n\)-valued multiplication with a strong two-sided unit. Then \(P\)
is strongly isomorphic over \(R\) to the \(d\)-fold diagonal of
\(\mathbb G_m\), for a unique divisor \(m\mid n\), where \(d=n/m\). Equivalently,
there is a strong coordinate \(B(t)=t+O(t^2)\in R[[t]]\) for which
\[
 \prod_{\nu=1}^{n}\left(W-B(z_\nu(x,y))\right)
 =p_m\bigl(W;B(x),B(y)\bigr)^d.
\]
More precisely, \(m\) is both the differential order (the rank) of the
generator and the number of distinct generic roots of \(P\).  The top
coefficient of the generator is
\[
 a_m(t)=\frac{m^m}{m!}t^{m-1}A(t),
 \qquad A(t)\in1+tR[[t]].
\]
There is a unique \(H(t)\in1+tR[[t]]\) satisfying
\(H(t)+mtH'(t)=A(t)^{-1/m}\).
Writing \(A(t)^{-1/m}=\sum_{k\ge0}g_kt^k\), the strong coordinate may be
chosen explicitly as
\[
 \begin{aligned}
 B(t)&=tH(t)^m
 =\frac1{m!}\left(\int_0^t a_m(u)^{-1/m}\,\dd u\right)^m\\
 &=t\left(\sum_{k\ge0}\frac{g_k}{1+mk}t^k\right)^m.
 \end{aligned}
\]
The integral is formal; its \(m\)-th power is independent of the root chosen.
If \(E=B^{-1}\), then after adjoining \(m\)-th roots and a primitive
\(m\)-th root of unity the distinct branches are
\[
 z_j(x,y)=E\left(
 \left(B(x)^{1/m}+\zeta_m^jB(y)^{1/m}\right)^m
 \right),
 \qquad j=0,\ldots,m-1,
\]
and each occurs with multiplicity \(d\).  In particular,
\(\Disc_Z(P)\ne0\) if and only if \(m=n\); the discriminant-zero cases are
exactly the proper diagonals \(d>1\).
\end{theorem}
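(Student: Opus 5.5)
Since \(R\) is a domain, it is reduced, so \cref{thm:formal-commutativity} applies and \(P(Z;x,y)=P(Z;y,x)\); the law is commutative. Put \(K=\Frac(R)\). We work with the normalized trace comultiplication and the left shift family \(\cL_a\). Their infinitesimal coefficient is the generator \(\cL=\sum_{k=1}^{r}a_k(t)\partial_t^k\), a differential operator over \(R[[t]]\). By the Buchstaber--Kholodov reconstruction theorem, a commutative formal \(n\)-valued group is determined by its generator, so the whole problem reduces to normalizing \(\cL\).

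The steps, in order:

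1. \textbf{Pass to the squarefree support.} Over \(K\) write the generic roots as \(m\) distinct branches \(z_1,\dots,z_m\), each with multiplicity \(d\). The unit condition \(P(Z;x,0)=(Z-x)^n\) forces all multiplicities to be equal, since the branches permute under monodromy. So \(n=md\), and the reduced polynomial \(P_{\rm red}\) defines an \(m\)-valued law with strong unit. Associativity descends to \(P_{\rm red}\) because the two flattened multisets coincide with multiplicities scaled by \(d^2\). Hence \(P=P_{\rm red}^d\), and it suffices to treat \(m=n\), \(\Disc\ne0\).

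2. \textbf{Identify the order and top coefficient of the generator.} Expand the shift \(\cL_a f(t)=\frac1m\sum_j f(z_j(t,a))\) in \(a\). The branches behave like \(t+\zeta^j c\,a^{1/m}t^{(m-1)/m}+\cdots\). Only at order \(a^1\) do the fractional powers cancel under summation over \(j\), and this first appears at \(\partial_t^m\). This identifies the rank as \(m\) and gives the leading term \(a_m(t)=\frac{m^m}{m!}t^{m-1}A(t)\). The normalization \(A(0)=1\) comes from comparing with the explicit generator of \(\mathbb G_m\) at lowest order. I expect this local expansion, namely showing there are exactly \(m\) distinct branches of Puiseux type and that the order equals \(m\), to be the main obstacle.

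3. \textbf{Choose the coordinate from \(a_m\).} A strong change of coordinate \(B\) conjugates \(\cL\), and its top coefficient transforms like an \(m\)-th power of a vector field: \(a_m\mapsto a_m\,(B')^m\) evaluated along \(B\). For \(\mathbb G_m\) the top coefficient is \(\frac{m^m}{m!}t^{m-1}\). So we need \(\bigl(\tfrac{dB^{1/m}}{dt}\bigr)^m\) to be proportional to \(a_m^{-1}\), which gives \(B=\frac1{m!}\bigl(\int_0^t a_m^{-1/m}\bigr)^m\). Since \(a_m^{-1/m}\) equals a constant times \(t^{-(m-1)/m}A^{-1/m}\), we get \(B=tH^m\) with \(H+mtH'=A^{-1/m}\). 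Solving termwise gives the coefficient formula \(g_k/(1+mk)\), which is uniquely solvable over a \(\mathbb Q\)-algebra and lies in \(R\). The \(m\)-th power removes the root ambiguity.

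4. \textbf{Show the rest of the generator is forced.} After the change of coordinate, \(\cL\) has top coefficient equal to that of \(\mathbb G_m\). We must show the lower coefficients agree too. Every shift \(\cL_a\) commutes with \(\cL\), and the generator itself must satisfy the compatibility imposed by coassociativity. Concretely, I would argue that the shifts \(\cL_a\) lie in the centralizer of the order-\(m\) operator \(\cL\); by Schur–Amitsur this centralizer is commutative and rank one over the constants. Combined with the requirement that \(\cL_a\) is a trace of substitutions, this pins down the lower coefficients by recursion in powers of \(t\). In practice: compare with Kholodov's elementary generator and check that homogeneity is forced order by order.

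5. \textbf{Reconstruct the law.} Reconstruction then yields \(\prod_\nu(W-B(z_\nu))=p_m(W;B(x),B(y))^d\), together with the stated branch formula. For uniqueness, \(m\) equals the number of distinct roots, which is a strong-isomorphism invariant. The discriminant statement follows from the fact that \(\Disc_Z\) vanishes iff \(d>1\).
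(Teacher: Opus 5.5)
Your Step~3 matches the paper's normalization ($B=tH^m$ with $h_k=g_k/(1+mk)$). The argument around it has genuine gaps, and the decisive one is Step~4.

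Showing that, after the coordinate change, the lower-order coefficients of the generator coincide with those of $\mathbb G_m$ is essentially Kholodov's conjecture itself. ``Check that homogeneity is forced order by order'' states the goal rather than an argument, and Kholodov could carry it out only in small cases. Schur--Amitsur does not help here. It says the centralizer of the generator $D$ is commutative, which constrains the shifts once $D$ is known, but it puts no constraint on $D$ itself. Without Step~4, the reconstruction in Step~5 has nothing to start from.

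The paper never determines the lower coefficients; it uses only the top symbol, branch by branch:
\begin{enumerate}[label=\textup{(\roman*)},leftmargin=*]
\item It applies the translation equation $D_xu=D_yu$ to $f(Z)=e^{TZ}$. Exponential independence of the \emph{distinct} roots separates the branches.
\item The coefficient of $T^m$ then gives $a_m(x)(\partial_xz_\nu)^m=a_m(y)(\partial_yz_\nu)^m$ for each distinct root.
\item In the ramified coordinates $x=q(u)$, $y=q(v)$, $q(s)=B^{-1}(s^m)$, with $w_\nu^m=B(\widehat z_\nu)$ and $w_\nu(u,0)=u$, this becomes $(\partial_uw_\nu)^m=(\partial_vw_\nu)^m$.
\item Hence $\partial_vw_\nu=\lambda_\nu\partial_uw_\nu$ with $\lambda_\nu\in\muN_m$, and a transport lemma gives $w_\nu=u+\lambda_\nu v$.
\end{enumerate}
Everything else, including the lower coefficients of the generator, follows a posteriori.

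Steps~1 and~2 assume what has to be proved.
\begin{itemize}[leftmargin=*]
\item \emph{Equal multiplicities.} The unit condition does not force them: $(Z-x-y)(Z-x-y-xy)^2$ satisfies both unit identities with multiplicities $1$ and $2$. There is also no a~priori transitive monodromy, since the squarefree part may be reducible over $\Frac(K[[x,y]])$. In the paper, $s=m$ and the common multiplicity $d$ come out last. The transformed polynomial has coefficients in $K[[u^m,v^m]]$, so $u\mapsto\omega u$ permutes the distinct roots $(u+\lambda v)^m$ transitively, which forces equal weights.
\item \emph{Puiseux shape.} The single cycle $t+\zeta^j\beta\,a^{1/m}+\cdots$ in your Step~2 is asserted, not derived; a~priori there can be several cycles with different exponents. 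The paper gets $\alpha_\nu=1/m$ for every branch, with $m$ defined as the rank. It uses the branchwise identity and Kholodov's $a_i\in t^{i-1}R[[t]]$, via the valuation count $0=r+m(\alpha_\nu-1)$ with $r\ge m-1$.
\item \emph{The constant $c$.} The value $c=m^m/m!$ is what gives $A\in1+tR[[t]]$, and hence $B\in R[[t]]$. The paper obtains it only after the classification over $K$, by comparing top coefficients through $\widetilde a_m(B(t))=a_m(t)B'(t)^m$. Your ``comparison at lowest order'' has no independent justification at that stage.
\end{itemize}
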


Over the fraction field, write the generic polynomial using its distinct roots.
The top symbol of Kholodov's translation equation separates these roots and,
after a ramified change of coordinate, forces every distinct branch to have
the form \(u+\lambda v\), with \(\lambda^m=1\).  Because the original
coefficients depend on \(u\) and \(v\) only through \(u^m\) and \(v^m\), the
set of branches is invariant under multiplication by every \(m\)-th root of
unity.  This forces all \(m\) branches to occur, and to occur with one common
multiplicity \(d\).  Thus \(n=md\) and the transformed polynomial is
\(p_m^d\).

\begin{corollary}\label[corollary]{cor:divisor-count}
Under the hypotheses of \cref{thm:cyclicity}, the strong-isomorphism classes
of associative one-dimensional formal \(n\)-valued multiplications with a
strong two-sided unit are in bijection with the positive divisors of \(n\).
In particular, there are exactly \(\tau(n)\) such classes, where \(\tau(n)\) is the number of positive divisors of \(n\).
\end{corollary}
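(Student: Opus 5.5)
The corollary should follow from \cref{thm:cyclicity} once two things are shown. First, every class contains one of the laws \(\mathbb G_m^{(d)}\), the \(d\)-fold diagonal of \(\mathbb G_m\), with \(md=n\). Second, these laws are pairwise non-isomorphic and each really is an associative formal \(n\)-valued law with a strong two-sided unit. The plan is to define a map from the divisors of \(n\) to the set of strong-isomorphism classes, sending \(m\mapsto[\mathbb G_m^{(n/m)}]\), and to check that it is well defined, surjective and injective.

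\textbf{Well-definedness.} I would verify that the polynomial \(p_m(Z;x,y)^{n/m}\in\mathbb Z[Z,x,y]\subset R[[x,y]][Z]\) defines an associative law with a strong two-sided unit.
\begin{itemize}
\item The unit identity holds because \(p_m(Z;x,0)=(Z-x)^m\), so its \(d\)-th power is \((Z-x)^n\), and symmetrically in \(y\).
\item Associativity of \(\mathbb G_m\) can be checked after adjoining \(m\)-th roots. Both bracketings yield the multiset \(\bigl(\sqrt[m]x+\zeta^i\sqrt[m]y+\zeta^j\sqrt[m]z\bigr)^m\), taken over all \(i,j\).
\item Taking the \(d\)-fold diagonal multiplies every multiplicity uniformly by \(d^2\) in both bracketings, so associativity is preserved.
\end{itemize}

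\textbf{Surjectivity.} This is exactly the existence statement of \cref{thm:cyclicity}: the strong coordinate \(B\) carries \(P\) to \(p_m(W;B(x),B(y))^d\).

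\textbf{Injectivity.} Here I would use a strong-isomorphism invariant, namely the number of distinct generic roots over \(\Frac(R[[x,y]])\), or over an algebraic closure of it.
\begin{itemize}
\item A strong coordinate change \(B\) is invertible in \(R[[t]]\). The map \(z\mapsto B(z)\) is then a bijection on roots in a suitable extension, so it preserves both the number of distinct roots and their multiplicities.
\item For \(\mathbb G_m^{(d)}\), the roots \(\bigl(\sqrt[m]x+\zeta_m^j\sqrt[m]y\bigr)^m\), \(j=0,\dots,m-1\), are pairwise distinct. Their differences have nonzero leading term \(m(\zeta^j-\zeta^k)x^{(m-1)/m}y^{1/m}\), and this is nonzero because \(m\) is invertible in a \(\mathbb Q\)-algebra domain. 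So exactly \(m\) distinct roots occur, each with multiplicity \(d\).
\item Hence \(\mathbb G_m^{(d)}\cong\mathbb G_{m'}^{(d')}\) forces \(m=m'\). This is also the uniqueness clause of \cref{thm:cyclicity}, which identifies \(m\) with the number of distinct generic roots.
\end{itemize}
Alternatively, one can use the rank of the generator, which the theorem also identifies with \(m\).

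\textbf{Count.} The map is therefore a bijection, and the number of classes equals the number of positive divisors of \(n\), namely \(\tau(n)\).

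The only mildly delicate point is transporting roots under \(B\) when working over an algebraic extension of the fraction field of \(R[[x,y]]\). I would handle it by applying \(B\) to the roots lying in the ring of Puiseux-type series \(\overline{K}[[x^{1/N},y^{1/N}]]\), where substitution into a series without constant term converges formally.
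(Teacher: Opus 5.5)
Your proposal is correct and follows the paper's route: existence is the normal form of \cref{thm:cyclicity}, and injectivity comes from an invariant that the theorem identifies with \(m\). The paper uses the rank of the generator, preserved because \(B'(t)\) is a unit in \(\widetilde a_m(B(t))=a_m(t)B'(t)^m\), whereas you lead with the number of distinct generic roots; that works here because you only transport roots of the explicit normal forms, which do lie in \(\overline K[[x^{1/N},y^{1/N}]]\), unlike the roots of a general law. Your explicit check that each \(p_m^{n/m}\) is an associative law with a strong two-sided unit supplies the realizability step that the paper leaves implicit.
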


\begin{corollary}\label[corollary]{cor:elementary-laws}
Under the hypotheses of \cref{thm:cyclicity}, an elementary law satisfies
\(P(Z;x,y)=p_m(Z;x,y)^{n/m}\) for a unique divisor \(m\mid n\).
\end{corollary}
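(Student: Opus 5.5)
The last statement is \cref{cor:elementary-laws}, and I will deduce it from \cref{thm:cyclicity} together with a homogeneity argument. Let \(P\) be elementary, so \(P(Z;x,y)\) is monic and homogeneous of total degree \(n\) in \((Z,x,y)\). By \cref{thm:cyclicity} there are a unique \(m\mid n\), with \(d=n/m\), and a strong coordinate \(B(t)=t+O(t^2)\) such that \(B\) carries \(P\) to \(p_m^d\). The task is to show that \(B\) can be replaced by the identity, i.e.\ \(P=p_m^d\) itself. Uniqueness of \(m\) is then immediate: \(m\) is the number of distinct generic roots, which is an invariant of \(P\).

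\textbf{Step 1: the scaling symmetry.} Homogeneity of \(P\) says that the root multiset is equivariant under \(t\mapsto\lambda t\): for \(\lambda\) transcendental over \(R\) (work over \(R(\lambda)\) or \(R[\lambda,\lambda^{-1}]\)),
\[
z_\nu(\lambda x,\lambda y)=\lambda z_\nu(x,y)
\]
as multisets. The law \(p_m^d\) has the same scaling property.

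Conjugating by \(B\), the coordinate \(B_\lambda(t)=\lambda^{-1}B(\lambda t)\), which is again strong, also carries \(P\) to \(p_m^d\). Hence \(C=B_\lambda\circ B^{-1}\) is a strong automorphism of \(p_m^d\) defined over \(R(\lambda)\).

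\textbf{Step 2: strong automorphisms of \(p_m^d\) are trivial.} This is the main obstacle. I would compute through the generator: an automorphism must preserve the generator of \(p_m^d\), whose top coefficient is \(\frac{m^m}{m!}t^{m-1}\), i.e.\ \(A\equiv1\).

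Under a coordinate change \(C\), the top coefficient \(a_m\) transforms as a tensor of weight \(m\):
\[
a_m(C(t))=C'(t)^m\,a_m(t).
\]
So \(C(t)^{m-1}=C'(t)^m t^{m-1}\). Writing \(C=tU(t)\) with \(U(0)=1\), this gives \(U^{m-1}=(U+tU')^m\). Comparing coefficients degree by degree forces \(U\equiv1\): if \(U=1+ct^k+\cdots\) with \(c\neq0\), the degree-\(k\) terms give \((m-1)c=m(k+1)c\), which is impossible for \(k\geq1\).

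The hypothesis needed here is that the generator transforms as a differential operator under strong isomorphism. I would cite this from the Buchstaber--Kholodov generator theory invoked in the proof of \cref{thm:cyclicity}. The explicit formula for \(B\) given there also shows that \(B\) is determined by \(a_m\), which supports this.

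\textbf{Step 3: conclusion.} By Step 2, \(C=\mathrm{id}\), so \(B(\lambda t)=\lambda B(t)\) for transcendental \(\lambda\). Comparing coefficients, \(b_k\lambda^k=\lambda b_k\) for every \(k\), so \(b_k=0\) for \(k\ge2\) and \(B(t)=t\). Therefore \(P=p_m^{n/m}\).

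An alternative to Steps 1–3 is to argue directly from the formula \(B=\frac1{m!}\bigl(\int a_m^{-1/m}\bigr)^m\). Homogeneity of \(P\) makes the generator coefficients homogeneous, so \(a_m\) is a monomial multiple of \(t^{m-1}\). This forces \(A\equiv1\), hence \(H\equiv1\) and \(B=t\). I would present this version if the homogeneity of the generator is easy to read off from the definition of the generator.
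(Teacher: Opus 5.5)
Your proof is correct, and the alternative you sketch at the end is exactly the paper's proof. The paper notes that homogeneity forces \(a_i(t)=c_it^{i-1}\). Hence \(A(t)\) is a constant lying in \(1+tR[[t]]\), so \(A=1\), \(H=1\), and the explicit coordinate of \cref{thm:cyclicity} is \(B(t)=t\).

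The homogeneity of the generator is indeed easy to read off. From \(z_\nu(\lambda x,\lambda y)=\lambda z_\nu(x,y)\) you get \(\cT_y\bigl(f(\lambda\,\cdot)\bigr)(t)=(\cT_{\lambda y}f)(\lambda t)\). Taking the coefficient of \(y\) gives \(a_i(\lambda t)=\lambda^{i-1}a_i(t)\).

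Your main route (Steps 1--3) is genuinely different. Instead of reading \(B\) off the explicit formula, you conjugate an arbitrary strong isomorphism by the scaling and show that \(p_m^{d}\) has no nontrivial strong automorphisms. For this you use the weight-\(m\) law \(\widetilde a_m(C(t))=a_m(t)C'(t)^m\). The paper itself derives this in Step 7 of the proof of \cref{thm:cyclicity}, so you need no external citation. Your comparison giving \((mk+1)c=0\) is right, and the verification that \(B_\lambda=\lambda^{-1}B(\lambda t)\) again carries \(P\) to \(p_m^{d}\) uses only the homogeneity of both laws.

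This route is longer than the paper's one-line argument, but it proves more. Every strong isomorphism from an elementary law to \(p_m^{d}\) is the identity, whichever \(B\) the theorem produces. Applied to \(B_2\circ B_1^{-1}\), the same rigidity also shows that the strong coordinate in \cref{thm:cyclicity} is unique.

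One simplification: you do not need a transcendental \(\lambda\). Taking \(\lambda=2\in\mathbb Q\subset R\) keeps everything over \(R\), and \(b_k(2^{k-1}-1)=0\) already forces \(b_k=0\) for \(k\ge2\).
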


\begin{proof}
Homogeneity of the law gives \(a_i(t)=c_i t^{i-1}\) for its generator
coefficients. Thus \(A(t)=1\) in \cref{thm:cyclicity}, so \(H(t)=1\) and
\(B(t)=t\). The classification identity therefore holds in the original
coordinate, and uniqueness of \(m\) follows from the same theorem.
\end{proof}

The paper is organized as follows. \Cref{sec:definitions} fixes the common
notation and recalls the trace, shift, strong-isomorphism, and cyclic-law
constructions. \Cref{sec:aux-algebraic} contains the
auxiliary results and proof of \cref{thm:algebraic-commutativity} and
\cref{thm:formal-commutativity}.  The auxiliary results from Kholodov's theory
and Puiseux analysis needed for \cref{thm:cyclicity} are collected in
\cref{sec:aux-cyclicity}; its proof is given in
\cref{sec:proof-cyclicity}.  The final section discusses weak units, polynomial
pullbacks, and a projective-duality relation for the corresponding multivalued
laws.

\paragraph{Acknowledgements.} The author is grateful to Victor M. Buchstaber for useful discussions. The study has been funded within the framework of the HSE University Basic Research Program (HSE-BR-2025-060).

\section{Definitions and constructions}\label{sec:definitions}

We use the same polynomial convention in the algebraic and formal settings.
An unordered \(n\)-tuple is written as an \(n\)-multiset
\(
 \multiset{z_1,\ldots,z_n}
\), with repetitions recording multiplicities. An \(n\)-valued multiplication
on a space \(X\) is an operation
\[
 x*y=\multiset{z_1,\ldots,z_n}\in\Sym^n(X).
\]
Associativity means equality of the two flattened \(n^2\)-multisets obtained
from \((x*y)*z\) and \(x*(y*z)\). A point \(0\) is a \emph{strong two-sided
unit} if
\[
 0*x=x*0=\multiset{x,\ldots,x}.
\]
Commutativity means equality of multisets \(x*y=y*x\).

\subsection*{Algebraic laws on the projective line}
\begin{definition}
An \emph{algebraic \(n\)-valued monoid with a strong unit on
\(\Pp^1_{\kfield}\)} is a rational map
\[
 \mu:\Pp^1_{\kfield}\times\Pp^1_{\kfield}
 \dashrightarrow
 \Sym^n(\Pp^1_{\kfield})\cong\Pp^n_{\kfield}
\]
which is associative as a rational map and has a strong unit
\(e\in\Pp^1(\kfield)\). The unit condition means that \(\mu\) is defined at
the generic points of \(\{e\}\times\Pp^1\) and
\(\Pp^1\times\{e\}\), and that its restrictions there are the diagonal
degree-\(n\) divisors. Thus, after flattening symmetric powers,
\[
 (x*y)*z=x*(y*z)
 \quad\text{as rational maps to }\Sym^{n^2}(\Pp^1),
\]
\[
 x*e=e*x=\multiset{x,\ldots,x}.
\]
An algebraic \(n\)-valued group is an algebraic \(n\)-valued monoid equipped
with a rational inversion satisfying the usual multivalued inverse condition.
The commutativity theorem concerns the monoid before an inversion is chosen.
\end{definition}

After a projective change of coordinate, one may assume that the unit is the
finite point \(0\), and we make this choice from now on.  To encode the law on
the function field, write
\[
 \mathcal P(Z_0,Z_1;x,y)
 =\sum_{i=0}^n p_i(x,y)Z_0^{n-i}Z_1^i
\]
for a homogeneous binary form representing the generic effective divisor
\(x*y\).  The coefficient tuple
\[
 [p_0(x,y):\cdots:p_n(x,y)]\in\Pp^n(\kfield(x,y))
\]
is defined only up to a common nonzero factor.  On the affine output chart
\(Z_1=1\), the same divisor is represented by
\[
 P(Z;x,y)=p_0(x,y)Z^n+p_1(x,y)Z^{n-1}+\cdots+p_n(x,y).
\]
The coefficient \(p_0\) need not be constant.  For a representative regular
on an open set, a point where \(p_0=0\) and not all \(p_i\) vanish corresponds
to an output divisor containing the point at infinity.  Over the function
field \(F=\kfield(x,y)\) we may divide by \(p_0\) and put
\begin{equation}\label{hat_P}
 \widehat P(Z;x,y)=p_0(x,y)^{-1}P(Z;x,y).
\end{equation}
This division is made only in the function field.  It neither removes poles
nor extends the monoid multiplication across its indeterminacy locus.  The unit
condition gives
\begin{equation}\label{eq:unit-polynomial}
 \widehat P(Z;0,t)=\widehat P(Z;t,0)=(Z-t)^n
\end{equation}
in \(\kfield(t)[Z]\).

\subsection*{Formal laws, generalized translations, and strong coordinates}\label{generalized_translations}

Let \(R\) be a commutative \(\mathbb Q\)-algebra. A one-dimensional formal
\(n\)-valued multiplication is encoded by a monic polynomial
\begin{equation}\label{eq:formal-P}
 P(Z;x,y)
 =Z^n-\theta_1(x,y)Z^{n-1}+\cdots+(-1)^n\theta_n(x,y)
 \in R[[x,y]][Z].
\end{equation}
Its roots \(z_1(x,y),\ldots,z_n(x,y)\) are auxiliary branch symbols in a
splitting extension; every symmetric polynomial expression in them is
determined by the coefficients of \(P\). The strong two-sided unit identities are
\begin{equation}\label{eq:formal-unit}
 P(Z;x,0)=(Z-x)^n,
 \qquad
 P(Z;0,y)=(Z-y)^n.
\end{equation}
In particular, \(\theta_i(0,0)=0\) for every \(i\). We interpret a symmetric
formal series in the branch symbols by expressing it as a formal series
in their elementary symmetric functions and then substituting
\(\theta_1(x,y),\ldots,\theta_n(x,y)\). Each coefficient in \(x,y\) is
determined by finitely many terms, since all the \(\theta_i\) have zero
constant term.

Associativity is equality of the two iterated degree-\(n^2\) branch
polynomials.  A branch-free form of this identity is
\begin{equation}\label{eq:formal-associativity-resultant}
 \Res_U\bigl(P(U;x,y),P(W;U,z)\bigr)
 =
 \Res_V\bigl(P(V;y,z),P(W;x,V)\bigr)
 \quad\text{in }R[[x,y,z]][W].
\end{equation}

The {\it normalized formal translation} is
\begin{equation}\label{eq:formal-translation}
 (\cT_yf)(x)
 =\frac1n\sum_{z_{\nu}\in x\ast y} f\bigl(z_\nu(x,y)\bigr),
 \qquad f\in R[[x]],
\end{equation} where \(x\ast y = [z_{\nu}\mid P(z_{\nu}; x, y) = 0]\).
The expression is symmetric in the branches \(z_{\nu}(x, y)\) and is therefore well defined.
Thus \((\Delta f)(x,y)=(\cT_yf)(x)\) defines a comultiplication with target
\(R[[x]]\widehat\otimes_R R[[y]]=R[[x,y]]\); associativity gives
coassociativity, and \(f\mapsto f(0)\) is its counit.
Fixing one input gives the formal {\it left} and {\it right generalized shifts}
\begin{equation}\label{eq:formal-shifts}
 (\cL_af)(t)=\frac1n\sum_{z\in a*t}f(z),
 \qquad
 (\cR_bf)(t)=\frac1n\sum_{z\in t*b}f(z).
\end{equation}
Their operator coefficients will be studied in \Cref{sec:aux-algebraic}.

The cyclic elementary formal law \(\mathbb G_n\) is represented by the polynomial
\begin{equation}\label{eq:pn}
 p_n(W;X,Y)
 =\prod_{j=0}^{n-1}
 \left(W-\bigl(u+\zeta_n^jv\bigr)^n\right),
 \qquad u^n=X,\quad v^n=Y.
\end{equation}
The product belongs to \(\mathbb Z[W,X,Y]\) and is independent of the
auxiliary choices. These polynomials are expressed via generalized Wendt matrices in the paper \cite{BuchstaberKornev2025Wendt}.

If a \(k\)-valued law is represented by \(Q(W;X,Y)\), its
\emph{\(d\)-fold diagonal} is the \(dk\)-valued law
represented by \(Q(W;X,Y)^d\): every value, including its multiplicity in
\(Q\), is repeated \(d\) times.  In particular, when \(k\mid n\), the
\(n\)-valued diagonal of \(\mathbb G_k\) is represented by
\begin{equation}\label{eq:diagonal-cyclic}
 p_k(W;X,Y)^{n/k}.
\end{equation}
If the original branches are \(z_1,\ldots,z_k\), the normalized translation
of the repeated law is
\begin{equation}\label{eq:translation-repetition}
 \frac{1}{dk}\sum_{j=1}^{k}\sum_{\ell=1}^{d}f(z_j)
 =\frac{1}{k}\sum_{j=1}^{k}f(z_j).
\end{equation}
Thus repeating every branch the same number of times leaves the normalized
translation unchanged.

\begin{definition}\label[definition]{def:strong}
Let \(P(W; X, Y)\) and \(Q(W;X,Y)\) be monic formal \(n\)-valued laws, and let
\[
 B(t)=t+O(t^2)\in R[[t]].
\]
Thus \(B(0)=0\), \(B'(0)=1\), and \(B\) has a compositional inverse.
The series \(B\) is a \emph{strong isomorphism} from \(P\) to \(Q\) if
\begin{equation}\label{eq:strong-isomorphism}
 \prod_{\nu=1}^n
 \left(W-B\bigl(z_\nu(x,y)\bigr)\right)
 =Q\bigl(W;B(x),B(y)\bigr),
\end{equation} where, as usual, \(z_\nu(x, y)\) denotes formal roots of the polynomial \(P(z; x, y)\) with respect to \(z\).
The left-hand side of \eqref{eq:strong-isomorphism} is a polynomial in \(W\) with coefficients in
\(R[[x,y]]\), interpreted by the preceding convention for symmetric formal
series; every branch \(z_\nu(x, y)\) is counted with its multiplicity. Since \(B(t)\) has a compositional inverse, distinct branches remain distinct after applying \(B(t)\): each branch is repeated the same number of times in the transformed polynomial as in the original polynomial.  \Cref{def:strong} is equivalent to
Buchstaber's;
see \parencite[Definition~1.3]{Buchstaber1975} and
\parencite[Lemma~1.3]{Kholodov1982}.
\end{definition}

\begin{lemma}[Commuting-shift criterion]\label[lemma]{lem:commuting-shifts}
\begin{enumerate}[label=\textup{(\roman*)},leftmargin=*]
\item Let \(X\) be a set equipped with an associative \(n\)-valued multiplication
\[
 \mu:X\times X\longrightarrow\Sym^n(X)
\]
and a strong two-sided unit \(e\). Fix a field \(\kfield\) of characteristic
zero. For each \(a\in X\), define the normalized left shift on functions
\(f:X\to\kfield\) by
\[
 (\cL_a f)(x)=\frac1n\sum_{z\in\mu(a,x)}f(z),
\]
where the sum counts multiplicities. The multiplication \(\mu\) is
commutative if and only if its normalized left shifts commute pairwise:
\[
 \cL_a\cL_b=\cL_b\cL_a
 \qquad\text{for all }a,b\in X.
\]

\item Suppose that either \(\mu\) is an associative algebraic \(n\)-valued
multiplication on \(\Pp^1_\kfield\) over a field \(\kfield\) of characteristic
zero, or \(P(Z;x,y)\in R[[x,y]][Z]\) defines an associative one-dimensional
formal \(n\)-valued multiplication over a commutative \(\mathbb Q\)-algebra
\(R\). In either case, assume that \(0\) is a strong two-sided unit.
The multiplication is commutative if and only if its normalized left shifts
commute on rational functions \(\kfield(t)\), respectively formal power series
\(R[[t]]\). The commutation identities are understood in \(\kfield(a,b,t)\),
respectively \(R[[a,b,t]]\), for independent parameters \(a,b\).
\end{enumerate}
\end{lemma}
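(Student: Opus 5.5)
The plan is to compute $\cL_a\cL_b$ directly, using associativity to rewrite it as a single sum over a triple product, and then to recover the multiset $a*b$ from that sum by evaluating at the unit.

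\emph{Part (i).} Associativity gives
\[
 (\cL_a\cL_b f)(x)=\frac1{n^2}\sum_{w\in a*(b*x)}f(w)=\frac1{n^2}\sum_{w\in (a*b)*x}f(w),
\]
where the sum runs over the flattened $n^2$-multiset with multiplicities. For the forward direction, commutativity makes the multisets $(a*b)*x$ and $(b*a)*x$ equal, so $\cL_a\cL_b=\cL_b\cL_a$. For the converse, evaluate at $x=e$. The strong unit gives $z*e=\multiset{z,\dots,z}$, hence
\[
 (\cL_a\cL_b f)(e)=\frac1n\sum_{z\in a*b}f(z).
\]
Apply this to the indicator function $f=\delta_c$. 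It gives $\mult_{a*b}(c)=\mult_{b*a}(c)$ for every $c\in X$, so $a*b=b*a$ as multisets. Characteristic zero is needed so that division by $n$ is allowed and integer multiplicities can be compared inside $\kfield$.

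\emph{Part (ii).} The same computation works with formal branches in place of points. The forward direction follows as before: by associativity, the resultant identity \eqref{eq:formal-associativity-resultant}, $\cL_a\cL_b f$ is $n^{-2}$ times a symmetric function of the roots of the degree-$n^2$ polynomial $\Res_U(P(U;a,b),P(W;U,t))$, which commutativity makes symmetric in $a,b$. For the converse, set $t=0$, the strong unit. The unit identity \eqref{eq:formal-unit} gives
\[
 (\cL_a\cL_b f)(0)=\frac1n\sum_\nu f(z_\nu(a,b)).
\]
Now take $f(t)=t^k$ for $k=1,\dots,n$. This shows that the power sums of the branches of $P(Z;a,b)$ and of $P(Z;b,a)$ agree. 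Over a $\mathbb Q$-algebra, Newton's identities recover $\theta_1,\dots,\theta_n$ from these power sums, so $P(Z;a,b)=P(Z;b,a)$. In the algebraic case, work with $\widehat P$ over $\kfield(a,b)$ and take $f=t^k\in\kfield(t)$ similarly.

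\textbf{Main obstacle.} I expect the hardest step to be justifying the evaluation at $t=0$ in the algebraic case. There $\cL_b f$ is only a rational function, and one must know that $(\cL_a\cL_b f)(a,b,t)$ is regular along $t=0$, with value equal to the expression obtained from the unit. I would argue that the strong unit condition means $\mu$ is defined at the generic point of $\Pp^1\times\{0\}$. It follows that the coefficients of $\widehat P(W;u,t)$ are regular at generic $(u,0)$ and reduce to those of $(W-u)^n$. Substituting the generic branches $u=z_\nu(a,b)$ then keeps us at generic points, so specialization is legitimate. Alternatively, one can pass to the completion at the unit and reduce to the formal case.
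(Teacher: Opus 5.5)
Your proof is correct and is essentially the paper's argument: evaluate $\cL_a\cL_b f$ at the unit, then test against indicator functions (respectively $f=t^k$, $1\le k\le n$, together with Newton's identities), and get the converse from associativity. The paper evaluates directly via $(\cL_a g)(e)=g(a)$, so it needs no associativity for the direction ``commuting shifts $\Rightarrow$ commutative''. There is one harmless slip: with your definition, $\bigl(\cL_a(\cL_b f)\bigr)(x)$ is a sum over $b*(a*x)=(b*a)*x$ rather than $(a*b)*x$. This changes nothing, since the hypothesis and the conclusion are both symmetric in $a$ and $b$.
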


\begin{proof}
\textup{(i)} Suppose first that the normalized left shifts commute. The strong-unit
identity gives
\[
 (\cL_a\cL_bf)(e)
 =(\cL_bf)(a)
 =\frac1n\sum_{z\in\mu(b,a)}f(z).
\]
Interchanging \(a\) and \(b\) and using commutation yields
\[
 \sum_{z\in\mu(a,b)}f(z)=\sum_{z\in\mu(b,a)}f(z)
\]
for every \(f:X\to\kfield\). Taking \(f\) to be the indicator function
of each singleton shows that every element occurs with the same multiplicity
in the two multisets. Hence \(\mu(a,b)=\mu(b,a)\).

Conversely, suppose that \(\mu\) is commutative. Then
\[
 \cL_a\cL_b
 =\frac1n\sum_{c\in\mu(b,a)}\cL_c
 =\frac1n\sum_{c\in\mu(a,b)}\cL_c
 =\cL_b\cL_a.
\]
The first and last equalities follow from associativity, and the middle
equality follows from commutativity.

\textup{(ii)} In the one-dimensional algebraic and formal settings, the same
evaluation argument applies with \(f(t)=t^j\), \(1\le j\le n\), whenever
the left shifts commute on rational or formal functions, respectively.
Equality of these power sums and Newton's identities give equality of the
monic branch polynomials, hence commutativity of the multiplication.
The converse follows from the same associativity identity used in
\textup{(i)}.
\end{proof}

\section{Commutativity}\label{sec:aux-algebraic}

Throughout this section, \(\kfield\) is a field of characteristic zero. The
trace construction becomes effective through its expansion at the strong unit.

For \(f=g/h\in\kfield(t)\), the generic sum
\(\sum_{z\in x*y}f(z)\) can be understood as a trace of some linear operator.  Set
\[
 \mathcal A=F[Z]/(\widehat P(Z;x,y)),\quad F := \kfield(x, y),
\] see \eqref{hat_P} for notation.
Then \(\mathcal A\) is an \(n\)-dimensional \(F\)-algebra.  If \(f=g/h\),
the class of \(h(Z)\) is invertible in \(\mathcal A\).  Indeed,
\[
 \Res_Z\bigl(\widehat P(Z;0,y),h(Z)\bigr)
 =\Res_Z\bigl((Z-y)^n,h(Z)\bigr)=h(y)^n\ne0.
\]
Here \(h(y)\ne0\) because \(h\) is a nonzero polynomial and \(y\) is
transcendental over \(\kfield\).  Thus \(\Res_Z(\widehat P,h)\ne0\) in
\(F\).  B\'ezout's identity shows that
\(h(Z)\) is a unit modulo \(\widehat P\), so \(f(Z)\in\mathcal A\) is well
defined. 

For each \(u\in\mathcal A\), let
\[
 \mult_u:\mathcal A\longrightarrow\mathcal A
\]
denote multiplication by \(u\).

\begin{proposition}[Trace transformation and generalized shifts]\label[proposition]{prop:coalgebra}
One \(n\)-th of the trace,
\begin{equation}\label{eq:coproduct}
 (\Delta f)(x,y)
 =\frac1n\Tr_{\mathcal A/F}\bigl(\mult_{f(Z)}\bigr)
 =\frac1n\sum_{z\in x*y}f(z)
\end{equation}
defines a linear map \(\Delta:\kfield(t)\to\kfield(x,y)\).  Whenever the
specializations are defined, fixing the first or the second input gives the
generalized shifts
\begin{equation}\label{eq:shifts}
 (\cL_af)(t)=(\Delta f)(a,t),
 \qquad
 (\cR_bf)(t)=(\Delta f)(t,b)
\end{equation}
as rational operator families.  They satisfy \(\cL_a\cR_b=\cR_b\cL_a\) on their
common domain.  The unit gives
\begin{equation}\label{eq:shift-unit}
 \cL_0=\cR_0=\id,
 \qquad (\cL_af)(0)=(\cR_af)(0)=f(a).
\end{equation}
The last two evaluations hold whenever \(f\) is regular at \(a\).
\end{proposition}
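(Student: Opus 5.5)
The plan is to verify the three assertions of \cref{prop:coalgebra} in turn: that \(\Delta\) is well defined and linear, that \(\cL_a\) and \(\cR_b\) commute, and the unit identities \eqref{eq:shift-unit}.

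\emph{Linearity and the trace formula.} By the resultant computation preceding the statement, \(h(Z)\) is invertible in \(\mathcal A\). Hence \(f(Z)=g(Z)h(Z)^{-1}\in\mathcal A\) is well defined and independent of the representative \(g/h\). The map \(f\mapsto f(Z)\) is a \(\kfield\)-algebra homomorphism \(\kfield(t)\to\mathcal A\), and \(\Tr_{\mathcal A/F}\) is \(F\)-linear, so \(\Delta\) is linear.

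To identify the trace with the branch sum, pass to a splitting field of \(\widehat P\) over \(F\). There \(\mathcal A\otimes_F\overline F\cong\prod_i \overline F[Z]/(Z-z_i)^{m_i}\). On each factor, multiplication by \(f(Z)\) is \(f(z_i)\) plus a nilpotent operator, so its trace is \(m_if(z_i)\). Summing over the factors gives \(\sum_{z\in x*y}f(z)\) with multiplicities, which proves \eqref{eq:coproduct}.

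\emph{Specializations and the commutation relation.} \(\Delta f\in\kfield(x,y)\) is a rational function, so \(\cL_af=(\Delta f)(a,\cdot)\) and \(\cR_bf=(\Delta f)(\cdot,b)\) make sense as rational families wherever the denominators do not vanish identically. For the commutation, compute both sides as functions of \((a,t,b)\) over \(\kfield(a,t,b)\):
\[
 (\cL_a\cR_bf)(t)=\frac1{n^2}\sum_{c\in a*t}\ \sum_{z\in c*b}f(z),
 \qquad
 (\cR_b\cL_af)(t)=\frac1{n^2}\sum_{c\in t*b}\ \sum_{z\in a*c}f(z).
\]
Here I use that specializing \(\Delta f\) at a branch \(c\) commutes with the trace. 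This holds because \(\widehat P(Z;c,b)\) is obtained by substitution, and the resultant argument applied to \(h\) still gives invertibility generically.

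The two double sums are the sums of \(f\) over the flattened multisets \((a*t)*b\) and \(a*(t*b)\). These coincide by associativity as rational maps to \(\Sym^{n^2}\), in the resultant form \eqref{eq:formal-associativity-resultant} over the function field. Equal degree-\(n^2\) branch polynomials give equal traces of \(f(W)\) in the corresponding \(n^2\)-dimensional algebras. This is the same trace identity as in the first step, applied to the composite polynomial.

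\emph{Unit identities.} Setting \(a=0\) and using \eqref{eq:unit-polynomial}, \(\widehat P(Z;0,t)=(Z-t)^n\), so \((\cL_0f)(t)=\frac1n\cdot nf(t)=f(t)\); likewise \(\cR_0=\id\). Setting \(t=0\) gives \((\cL_af)(0)=(\Delta f)(a,0)=f(a)\) by \(\widehat P(Z;a,0)=(Z-a)^n\), and symmetrically for \(\cR_a\). This requires \(f\) to be regular at \(a\), so that the evaluation of \(h\) is nonzero.

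\emph{Main obstacle.} The step needing care is justifying that specialization, such as \(x=0\) or \(x=c\) a branch, commutes with taking the trace. Here \(\mu\) is only rational, and \(p_0\) may vanish. I would handle this by noting that the unit hypothesis says \(\mu\) is defined at the generic points of \(\{0\}\times\Pp^1\). Hence one can choose the representative so that \(p_0(0,t)\neq0\) and the coefficients of \(\widehat P\) are regular along \(x=0\). The trace of \(f(Z)\) is then a polynomial expression in these coefficients divided by a power of \(\Res_Z(\widehat P,h)\), which specializes to \(h(t)^n\neq0\). So specialization is legitimate.
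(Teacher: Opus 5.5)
Your proposal is correct and follows the paper's own route. The trace is identified with the branch sum through the eigenvalues of multiplication by \(f(Z)\). The double sums defining \(\cL_a\cR_bf\) and \(\cR_b\cL_af\) are the traces over the flattened multisets \((a*t)*b\) and \(a*(t*b)\). The unit identities come from specializing \(\widehat P\) at a zero input.

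Your added justification that specialization commutes with the trace makes explicit a point the paper leaves implicit: the trace has denominator \(\Res_Z(\widehat P,h)\), which restricts to \(h(t)^n\ne0\). For the evaluation at a branch \(c\), the cleanest reason is this: by the unit axiom, \(c\) is nonconstant. Hence \(x\mapsto c\), \(y\mapsto b\) is a field embedding of \(\kfield(x,y)\), and the trace commutes with it.
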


\begin{proof}
In the basis \(1,Z,\ldots,Z^{n-1}\), the operator \(\mult_Z\) is represented by
the companion matrix of \(\widehat P\).  Over a splitting field its
eigenvalues are the elements of \(x*y\), counted with multiplicity.  Hence
\(\Tr(\mult_{f(Z)})=\sum_{z\in x*y}f(z)\) and
\eqref{eq:coproduct} defines the stated rational function.

Associativity gives the coassociativity identity
\begin{equation}\label{eq:rational-coassociativity}
 \frac1{n^2}\sum_{u\in x*y}\sum_{v\in u*z}f(v)
 =\frac1{n^2}\sum_{u\in y*z}\sum_{v\in x*u}f(v)
 \quad\text{in }\kfield(x,y,z),
\end{equation}
because its two sides are one \(n^2\)-th of the trace over \((x*y)*z\) and
\(x*(y*z)\), respectively. We call \(\Delta\) the rational trace
comultiplication of the law: coassociativity is understood as the identity
\eqref{eq:rational-coassociativity} in \(\kfield(x,y,z)\). Its target is
\(\kfield(x,y)=\Frac(\kfield(x)\otimes_{\kfield}\kfield(y))\).
Setting either input equal to \(0\) gives \eqref{eq:shift-unit}; evaluation
\(f\mapsto f(0)\) is defined only for rational functions regular at \(0\).
The same associativity identity with inputs
\((a,t,b)\) identifies the sums defining \(\cL_a\cR_bf\) and \(\cR_b\cL_af\).  Thus
\(\cL_a\cR_b=\cR_b\cL_a\).  
\end{proof}

The trace transformation becomes useful through its expansion at the unit.
The following local calculation shows that every coefficient of such an
expansion is an ordinary differential operator with rational coefficients.

\begin{proposition}[Trace coefficients are differential operators]\label[proposition]{prop:trace-differential}
Let \(K=\kfield(t)\) and let \(Q(Z;s)\in K[[s]][Z]\) be monic of degree
\(n\) in \(Z\), with \(Q(Z;0)=(Z-t)^n\).  Put
\(\mathcal A=K[[s]][Z]/(Q)\) and define
\begin{equation}\label{eq:local-trace-family}
 (\cT_sf)(t)
 =\frac1n\Tr_{\mathcal A/K[[s]]}\bigl(\mult_{f(Z)}\bigr),
 \qquad f\in K.
\end{equation}
If \(\cT_s=\sum_{r\geq0}s^rA_r\), then
\begin{equation}\label{eq:local-differential-coefficients}
 A_r\in \Diff,
 \qquad [\partial_t,t]=1,
\end{equation}
and its differential order is at most \(n(r+1)-1\).
\end{proposition}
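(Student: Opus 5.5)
We expand the trace through the power sums of the roots. Write \(Q(Z;s)=(Z-t)^n+\sum_{r\ge1}s^rQ_r(Z)\) with \(Q_r\in K[Z]\) of degree \(<n\). Set \(\varepsilon=Z-t\) in \(\mathcal A\). For \(f\in K=\kfield(t)\) regular at \(t\), Taylor's formula over \(K\) gives
\[
 f(Z)=\sum_{k\ge0}\frac{f^{(k)}(t)}{k!}\,\varepsilon^k .
\]
This formal expansion makes sense in \(\mathcal A\) because \(\varepsilon\) is topologically nilpotent there: \(\varepsilon^n=-\sum_{r\ge1}s^rQ_r(Z)\in s\mathcal A\), so \(\varepsilon^{nj}\in s^j\mathcal A\). (For a general rational \(f=g/h\) with \(h(t)\neq0\) in \(K\), \(h(Z)\) is a unit and the same Taylor expansion holds. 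This is legitimate because \(t\) is transcendental and every \(f\in K\) is regular at the generic point \(t\).)

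Taking \(\frac1n\Tr\), we get
\[
 (\cT_sf)(t)=\sum_{k\ge0}\frac{f^{(k)}(t)}{k!}\,\sigma_k(t,s),\qquad \sigma_k=\frac1n\Tr_{\mathcal A/K[[s]]}(\mult_{\varepsilon^k})\in K[[s]].
\]
Here \(\sigma_k\) is \(\frac1n\) times the \(k\)-th power sum of the shifted roots \(z_\nu-t\). Writing \(\sigma_k=\sum_r \sigma_{k,r}s^r\), the coefficient of \(s^r\) is
\[
 A_r=\sum_{k}\frac{\sigma_{k,r}(t)}{k!}\,\partial_t^k .
\]
It remains to show that this sum is finite, with \(\sigma_{k,r}\in K\) and \(\sigma_{k,r}=0\) for \(k\ge n(r+1)\). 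This gives \(A_r\in\Diff\) of order at most \(n(r+1)-1\).

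The main step is the vanishing bound. Since \(\varepsilon^{nj}\in s^j\mathcal A\) and the trace is \(K[[s]]\)-linear, we get \(\sigma_{k}\in s^{\lfloor k/n\rfloor}K[[s]]\). If \(k\ge n(r+1)\), then \(\lfloor k/n\rfloor\ge r+1\), so the \(s^r\)-coefficient of \(\sigma_k\) vanishes. This is exactly the claimed bound.

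Coefficients lie in \(K\) because \(\mathcal A\) is free over \(K[[s]]\) with basis \(1,\dots,Z^{n-1}\). The matrix of \(\mult_{\varepsilon^k}\) in this basis has entries in \(K[[s]]\), obtained by polynomial division by the monic \(Q\), so its trace lies in \(K[[s]]\). Finally, we need that interchanging the infinite Taylor sum with the trace and the \(s\)-expansion is legitimate. This holds because, for each fixed \(r\), only the finitely many terms with \(k<n(r+1)\) contribute, so convergence is in the \(s\)-adic topology.
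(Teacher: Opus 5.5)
Your proof is correct and follows essentially the same route as the paper: expand \(f(Z)\) by Taylor's formula in \(\varepsilon=Z-t\), use \(\varepsilon^n\in s\mathcal A\) to see that only the terms with \(k<n(r+1)\) reach the coefficient of \(s^r\), and use that the trace of \(\mult_{\varepsilon^k}\) lies in \(K[[s]]\). The only difference is that the paper passes to the truncation \(\mathcal A/s^{r+1}\mathcal A\), where \(\varepsilon\) is genuinely nilpotent and the Taylor sum is finite, and it proves that \(h(Z)\) is a unit via reduction mod \(s\) and a determinant; you instead work with the \(s\)-adically convergent expansion in \(\mathcal A\) and get invertibility from topological nilpotence, which is equally valid.
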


\begin{proof}
The trace family is well defined for every \(f=g/h\in K\). Indeed,
\(\mathcal A/s\mathcal A\simeq K[\delta]/(\delta^n)\), where
\(\delta=Z-t\). In this quotient, \(h(Z)=h(t)+\delta u\) for some \(u\),
so it is invertible: \(h(t)\ne0\) and \(\delta\) is nilpotent.
Since \(\mathcal A\) is finite free over \(K[[s]]\), the matrix of
multiplication by \(h(Z)\) has determinant with nonzero constant term.
It is therefore invertible over \(K[[s]]\), and \(h(Z)\) is a unit in
\(\mathcal A\).

Fix \(r\geq0\), put \(R_r=K[s]/(s^{r+1})\) and
\(\mathcal A_r=R_r[Z]/(Q)\), and write \(\delta=Z-t\).  For some
\(H\in K[[s]][\delta]\), the relation in \(\mathcal A_r\) is
\(Q(t+\delta;s)=\delta^n+sH=0\).  Consequently,
\begin{equation}\label{eq:nilpotent-delta}
 \delta^{n(r+1)}=(-sH)^{r+1}=0
 \qquad\text{in }\mathcal A_r,
\end{equation}
because \(s^{r+1}=0\) in \(R_r\).

Taylor's formula is therefore finite in \(\mathcal A_r\):
\begin{equation}\label{eq:finite-taylor}
 f(Z)=f(t+\delta)
 =\sum_{j=0}^{n(r+1)-1}
 \frac{f^{(j)}(t)}{j!}\,\delta^j.
\end{equation}
For a rational function this identity follows by applying the polynomial
Taylor formula to its numerator and expanding its denominator. The latter is
invertible modulo the nilpotent element \(\delta\).
Taking one \(n\)-th of the trace of multiplication by both sides yields
\begin{equation}\label{eq:trace-taylor}
 \cT_sf(t)
 \equiv
 \sum_{j=0}^{n(r+1)-1}c_j(s,t)f^{(j)}(t)
 \pmod{s^{r+1}},
 \qquad
 c_j(s,t)
 =\frac{1}{n\,j!}
 \Tr_{\mathcal A_r/R_r}\bigl(\mult_{\delta^j}\bigr)
 \in R_r.
\end{equation}
Although \(\delta^j\) depends on \(Z\), its trace does not: the trace of an
\(R_r\)-linear endomorphism lies in the base ring \(R_r\). Taking the
coefficient of \(s^r\) in \eqref{eq:trace-taylor} expresses \(A_r\) as a
finite sum of operators \(a_j(t)\partial_t^j\), with
\(a_j(t)\in\kfield(t)\), and proves the assertion.
\end{proof}

The operators obtained from the two shift families will commute with a common
nonscalar operator. The final ingredient for the proof of \cref{thm:algebraic-commutativity} says that its centralizer is already
commutative.

\begin{theorem}[Schur--Amitsur centralizer theorem]\label{thm:schur-amitsur}
Let \(\kfield\) be a field of characteristic zero, let
\(K\) be either \(\kfield(t)\) or \(\kfield((t))\), and put
\(\mathcal D_K=K\langle\partial_t\rangle\). If
\(D\in\mathcal D_K\setminus\kfield\), then its centralizer
\[
 C_{\mathcal D_K}(D)
 =\{A\in\mathcal D_K:[A,D]=0\}
\]
is commutative.
\end{theorem}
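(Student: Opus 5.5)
We first dispose of order zero. If \(D=f\in K\setminus\kfield\), then \(f'\neq0\), because the constants of \(K\) for \(\partial_t\) are exactly \(\kfield\) (in both cases \(\kfield(t)\) and \(\kfield((t))\)). For \(A=\sum_{j\le k}a_j\partial_t^j\) with \(a_k\ne0\), the top term of \([A,f]\) is \(k\,a_k f'\partial_t^{k-1}\). This is nonzero when \(k\ge1\), since the characteristic is zero. Hence \(C_{\mathcal D_K}(f)=K\), which is commutative. From now on \(D\) has order \(m\ge1\) with leading coefficient \(d(t)\ne0\).

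\emph{Step 1: the symbol computation.} We work in the ring of formal pseudo-differential operators \(K((\partial_t^{-1}))\), built with the usual Leibniz rule for \(\partial_t^{-1}\). Let \(A\) commute with \(D\), with \(\ord A=k\) and leading coefficient \(a\). The coefficient of \(\partial_t^{k+m-1}\) in \([A,D]\) is \(k\,a\,d'-m\,d\,a'\), so \((a^m/d^k)'=0\). In particular, any two elements of the centralizer with the same order have proportional leading coefficients, with ratio in \(\kfield\) (up to the constant field of a finite extension, see Step 2). This already shows that the associated graded of the centralizer is small. We will not argue from it directly, however, but instead use Schur's root.

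\emph{Step 2: the \(m\)-th root.} Replace \(K\) by the finite extension \(K'=K(d^{1/m})\). The derivation extends uniquely, the field of constants \(\kfield'\) of \(K'\) is algebraic over \(\kfield\), and commutativity over \(K'\) implies commutativity over \(K\). By the standard recursive solution of triangular equations, order by order in \(\partial_t^{-1}\), there is a pseudo-differential operator \(L=d^{1/m}\partial_t+\cdots\) of order one with \(L^m=D\). The root is unique once its leading coefficient is fixed. If \(A\) commutes with \(D\), then \(A^{-1}\)-conjugation, or more directly \(ALA^{-1}\) for invertible \(A\), is again an \(m\)-th root of \(D\) with the same leading coefficient, hence equals \(L\). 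To avoid inverting \(A\), we argue recursively: the commutator \([A,L]\) satisfies
\[
\sum_{i=0}^{m-1}L^i[A,L]L^{m-1-i}=[A,D]=0.
\]
If \([A,L]\ne0\), its leading term \(c\,\partial_t^{r}\) would produce the leading term \(m\,d^{(m-1)/m}c\,\partial_t^{r+m-1}\ne0\) on the left, a contradiction. So \(C(D)\subseteq C(L)\).

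\emph{Step 3: the centralizer of \(L\).} We prove that every \(A\in C(L)\) is a formal Laurent series \(\sum_{j\le k}c_jL^j\) with \(c_j\in\kfield'\). Apply Step 1 with \(m=1\): the leading coefficient \(a\) of \(A\) satisfies \((a/d^{k/m})'=0\), so \(a=c_k\cdot(d^{1/m})^k\) with \(c_k\in\kfield'\). Then \(A-c_kL^k\) commutes with \(L\) and has smaller order, and induction on order (with convergence in the \(\partial_t^{-1}\)-adic topology) gives the expansion. Any two such series commute, because the \(c_j\) are constants and so commute with \(L\). Hence \(C(L)\), and a fortiori \(C_{\mathcal D_K}(D)\subseteq C(L)\cap\mathcal D_K\), is commutative.

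I expect the main technical care to lie in Step 2. One must check that the extension \(K'\) keeps a constant field over which Step 3's leading-coefficient argument applies. One must also verify the nonvanishing of the leading term of \(\sum_i L^i[A,L]L^{m-1-i}\); it is here that characteristic zero is used, through the factor \(m\).
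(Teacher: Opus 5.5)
Your proof is correct, but for positive order it takes a different route from the paper. The paper treats the order-zero case exactly as you do: the top term \(k\,a_k h'\partial_t^{k-1}\) of \([A,h]\) is nonzero because \(h'\ne0\) and the characteristic is zero. For positive order it gives no argument and simply invokes Amitsur's Theorem~1 for the differential field \(K\), mentioning Schur only historically.

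You instead reproduce Schur's fractional-power argument:
\begin{enumerate}[label=\textup{(\roman*)},leftmargin=*]
\item After adjoining \(d^{1/m}\), you build the unique \(m\)-th root \(L=d^{1/m}\partial_t+\cdots\) of \(D\) in \(K'((\partial_t^{-1}))\) by the triangular recursion.
\item You get \(C(D)\subseteq C(L)\) from the leading term \(m\,d^{(m-1)/m}c\,\partial_t^{r+m-1}\) of \(\sum_i L^i[A,L]L^{m-1-i}\).
\item You identify \(C(L)\) with Laurent series \(\sum_{j\le k}c_jL^j\) whose coefficients are constants. These commute because constants of \(K'\) are central in \(K'((\partial_t^{-1}))\) and multiplication is \(\partial_t^{-1}\)-adically continuous.
\end{enumerate}

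All of these steps check out. The recursion and the nonvanishing of leading terms need only that \(m\) is invertible and \(d\ne0\). The constant-field worry you flag is harmless. Step~3 needs only that \((a/l^k)'=0\) forces \(a/l^k\) to be a constant of \(K'\), and that such constants are central. Commutativity then descends from \(K'\) to \(K\) by inclusion of centralizers.

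Your argument is self-contained and shows exactly where characteristic zero enters. The paper's citation is shorter, and Amitsur's theorem gives more than the paper needs: the centralizer is also a free \(\kfield[D]\)-module of finite rank, whereas only commutativity is used later. Your Step~1 remark about proportional leading coefficients is not needed beyond its \(m=1\) instance in Step~3.
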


If \(D\) has positive differential order, the assertion is Amitsur's theorem
for the differential field \(K\) \parencite[Theorem~1]{Amitsur1958}. If
\(D\) has order zero, it is multiplication by a nonconstant element
\(h\in K\). Comparing the leading term of \([A,h]\) shows that no
positive-order operator \(A\) commutes with \(h\), because \(h'\ne0\) in
characteristic zero. Its centralizer is therefore \(K\), which is
commutative. The positive-order phenomenon goes back to Schur
\parencite{Schur1905}. For centralizers in the polynomial-coefficient Weyl
algebra see \cite{MakarLimanov2021}.


\begin{proof}[Proof of \cref{thm:algebraic-commutativity}]

Choose a projective coordinate in which the unit is \(0\), and use the affine
chart containing it.  At the generic point of this chart, divide the polynomial
\(P(Z;x,y)\) by its leading coefficient and define the generalized shifts
\(\cL_a,\cR_b\) by \eqref{eq:shifts}. The identity
\(0*t=[t,\ldots,t]\) controls the expansion of \(\cL_a\) at \(a=0\), while
\(t*0=[t,\ldots,t]\) controls the expansion of \(\cR_b\) at \(b=0\). The
coefficients of the corresponding monic representatives are regular in these
parameters at zero: the rational map is defined at the generic points of the
two unit divisors and lands in the chosen affine output chart there. We
therefore have presentations
\begin{equation}\label{eq:operator-expansions}
 \cL_a=\sum_{r\geq0}a^rD_r(t,\partial_t),
 \qquad
 \cR_b=\sum_{s\geq0}b^sE_s(t,\partial_t)
\end{equation}
as formal operator families on \(\kfield(t)\), where \(D_r\) and \(E_s\) are
certain differential polynomials with rational coefficients.

Apply \cref{prop:trace-differential} to
\(Q(Z;s)=\widehat P(Z;s,t)\) and to
\(Q(Z;s)=\widehat P(Z;t,s)\).  The unit identity
\eqref{eq:unit-polynomial} supplies the required specialization at \(s=0\),
so
\begin{equation}\label{eq:coefficients-diff}
 D_r,E_s\in\Diff
 \qquad\text{for all }r,s\geq0.
\end{equation}

By \cref{prop:coalgebra}, associativity gives \(\cL_a\cR_b=\cR_b\cL_a\).
Substitute \eqref{eq:operator-expansions} and compare the coefficient of
\(a^rb^s\).  This gives
\begin{equation}\label{eq:mixed-coefficients}
 [D_r,E_s]=0
 \qquad\text{for all }r,s\geq0.
\end{equation}

Since \(\cR_b1=1\), comparison of coefficients gives \(E_s1=0\) for
every \(s\ge1\). Put \(f_0(t)=t\) and
\(H(t,b)=(\cR_bf_0)(t)-t\). The left-unit identity gives \(H(0,b)=b\),
so \(H\) is a nonzero rational function. Its formal expansion at \(b=0\)
is therefore nonzero, since a rational function with zero expansion is zero.
As \(E_0=\id\), the constant term cancels, leaving
\begin{equation}\label{eq:nonzero-shift-coefficient}
 H(t,b)=\sum_{s\ge1}b^s(E_sf_0)(t).
\end{equation}
Thus at least one coefficient \(E_{s_0}f_0\), with \(s_0\ge1\), is
nonzero. Consequently, \(E_{s_0}\ne0\), and \(E_{s_0}1=0\) shows that
it is not scalar.

Every \(D_r\) belongs, by \eqref{eq:mixed-coefficients}, to the centralizer of
\(E_{s_0}\) in \(\Diff\). By \cref{thm:schur-amitsur}, that centralizer is
commutative.  Therefore
\begin{equation}\label{eq:left-coefficients-commute}
 [D_r,D_q]=0
 \qquad\text{for all }r,q\geq0.
\end{equation}
It follows first as a formal identity at \((a,b)=(0,0)\). Expansion in the
local parameters is injective on rational functions regular there, so this is
an identity of rational operator families: all left generalized shifts
commute,
\begin{equation}\label{eq:left-shifts-commute}
 \cL_a\cL_b=\cL_b\cL_a.
\end{equation}

By \cref{lem:commuting-shifts}\textup{(ii)}, the multiplication is
commutative on the dense
open set where the correspondences and their affine branch polynomials are
defined.  Hence the two rational maps \(\mu(x,y)\) and \(\mu(y,x)\) to the
separated variety \(\Sym^n(\Pp^1)\) agree as rational maps.  This proves
\cref{thm:algebraic-commutativity}.

\end{proof}


The same argument applies to formal \(n\)-valued groups.

\begin{proposition}[Formal shift coefficients]\label[proposition]{prop:formal-shift-coefficients}
Let \(\kfield\) be a field of characteristic zero and let
\(P(Z;x,y)\in\kfield[[x,y]][Z]\) be an associative formal
\(n\)-valued multiplication satisfying \eqref{eq:formal-unit}. Then
\[
 \cL_a=\sum_{r\ge0}a^rD_r(t,\partial_t),
 \qquad
 \cR_b=\sum_{s\ge0}b^sE_s(t,\partial_t),
\]
where
\[
 D_r,E_s\in\kfield((t))\langle\partial_t\rangle.
\]
Moreover,
\[
 [D_r,E_s]=0
 \qquad\text{for all }r,s\ge0,
\]
and, for every \(f\in\kfield[[t]]\),
\[
 \cL_0=\cR_0=\id,
 \qquad
 (\cL_af)(0)=(\cR_af)(0)=f(a).
\]
\end{proposition}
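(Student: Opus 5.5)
The plan is to repeat the proof of \cref{prop:trace-differential} over the formal base, with \(K=\kfield((t))\) replacing \(\kfield(t)\). First, the formal shifts must be defined as operator families. For \(f\in\kfield[[t]]\), the series \((\cL_af)(t)=(\Delta f)(a,t)\) and \((\cR_bf)(t)=(\Delta f)(t,b)\) are well-defined elements of \(\kfield[[a,t]]\), respectively \(\kfield[[t,b]]\). This is the convention for symmetric formal series: the power sums \(\sum_\nu z_\nu^j\) are polynomials in the \(\theta_i\), which have zero constant term. Expanding in \(a\) (resp.\ \(b\)) gives coefficient maps \(D_r,E_s:\kfield[[t]]\to\kfield[[t]]\), and the task is to identify these with differential operators.

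Next, fix \(r\) and put \(Q(Z;s)=P(Z;s,t)\), viewed in \(\kfield[[t]][[s]][Z]\subset K[[s]][Z]\). By \eqref{eq:formal-unit}, \(Q(Z;0)=(Z-t)^n\). As in \eqref{eq:nilpotent-delta}, with \(\delta=Z-t\), we have \(\delta^{n(r+1)}=0\) in \(\mathcal A_r=R_r[Z]/(Q)\), where \(R_r=\kfield[[t]][s]/(s^{r+1})\). For \(f\in\kfield[[t]]\) the substitution \(f(t+\delta)\) is legitimate because \(\delta\) is nilpotent, so the finite Taylor formula \eqref{eq:finite-taylor} holds. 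One must check that this substitution agrees with the branchwise definition of \(\frac1n\sum f(z_\nu)\). I would verify this on monomials \(f=t^j\), where both sides are \(\frac1n\Tr(\mult_{Z^j})\), and then pass to general \(f\) by \((t,s)\)-adic continuity, since each coefficient depends on finitely many terms. Taking one \(n\)-th of the trace then gives
\[
 A_r=\sum_{j<n(r+1)}c_{j,r}(t)\partial_t^j,
 \qquad c_{j,r}\in\kfield[[t]].
\]
Hence \(D_r\in\kfield[[t]]\langle\partial_t\rangle\subset\kfield((t))\langle\partial_t\rangle\). The same argument with \(Q(Z;s)=P(Z;t,s)\) handles \(E_s\).

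The commutation relation comes from formal associativity. The resultant identity \eqref{eq:formal-associativity-resultant} states that the two iterated degree-\(n^2\) branch polynomials agree in \(\kfield[[x,y,z]][W]\), so their power sums agree. This yields the formal coassociativity identity, the analogue of \eqref{eq:rational-coassociativity}, for \(f=t^j\), and by continuity for all \(f\in\kfield[[t]]\). Specializing the inputs to \((a,t,b)\) gives \(\cL_a\cR_bf=\cR_b\cL_af\) in \(\kfield[[a,b,t]]\). Comparing coefficients of \(a^rb^s\) gives \([D_r,E_s]f=0\) for every \(f\in\kfield[[t]]\). A differential operator with coefficients in \(\kfield((t))\) that kills all polynomials is zero: apply it to \(1,t,t^2,\ldots\) in turn to see that its coefficients vanish inductively. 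Hence \([D_r,E_s]=0\) in \(\kfield((t))\langle\partial_t\rangle\).

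Finally, the unit identities \eqref{eq:formal-unit} give \(0*t=t*0=[t,\ldots,t]\), so \(\cL_0=\cR_0=\id\). Setting \(t=0\) gives \(a*0=[a,\ldots,a]\), so \((\cL_af)(0)=f(a)\), and symmetrically \((\cR_af)(0)=f(a)\). The main obstacle I expect is the bookkeeping in the second step: showing that the nilpotent-Taylor trace computation agrees with the symmetric-function definition of the formal translation for arbitrary power series \(f\), not just polynomials. I would handle this through the monomial case together with the finiteness of each coefficient.
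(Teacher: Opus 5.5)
Your proposal is correct and follows the paper's proof essentially step for step. You truncate modulo \(a^{r+1}\) (your \(s^{r+1}\)), use nilpotence of \(\delta=Z-t\) to get a finite Taylor expansion, and take the normalized trace to land in \(\kfield[[t]]\langle\partial_t\rangle\). You then read off \([D_r,E_s]=0\) from associativity by comparing coefficients of \(a^rb^s\), and the unit identities follow directly. You also supply two checks the paper leaves implicit, and both are handled correctly: that the trace formula agrees with the symmetric-series definition (monomials plus adic continuity), and that an operator in \(\kfield((t))\langle\partial_t\rangle\) killing all polynomials is zero.
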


\begin{proof}
For \(r\ge0\), reduce \(P(Z;a,t)\) modulo \(a^{r+1}\) and work in
\[
 \mathcal A_r=
 \bigl(\kfield[[t]][a]/(a^{r+1})\bigr)[Z]/(P(Z;a,t)).
\]
Writing \(\delta=Z-t\), the unit identity gives
\(\delta^n\in a\mathcal A_r\), hence \(\delta^{n(r+1)}=0\). The finite
Taylor expansion used in \cref{prop:trace-differential}, followed by the
normalized trace and extraction of the coefficient of \(a^r\), gives
\(D_r\in\kfield[[t]]\langle\partial_t\rangle\), of order at most
\(n(r+1)-1\). Applying the same argument to \(P(Z;t,b)\) gives the
assertion for \(E_s\). Associativity identifies the branch sums defining
\(\cL_a\cR_bf\) and \(\cR_b\cL_af\) in \(\kfield[[a,b,t]]\).
Comparing coefficients of \(a^rb^s\) gives \([D_r,E_s]=0\). The remaining
identities follow from the strong two-sided unit.
\end{proof}

\begin{corollary}[Formal commutativity over a field]
\label[corollary]{cor:formal-field-commutativity}
Let \(\kfield\) be a field of characteristic zero. Every associative
one-dimensional formal \(n\)-valued multiplication over \(\kfield\) with a
strong two-sided unit is commutative.
\end{corollary}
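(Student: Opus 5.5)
The plan is to copy the proof of \cref{thm:algebraic-commutativity}, replacing the rational operator families by the formal ones from \cref{prop:formal-shift-coefficients}. That proposition gives
\[
 \cL_a=\sum_{r\ge0}a^rD_r,
 \qquad
 \cR_b=\sum_{s\ge0}b^sE_s,
\]
with \(D_r,E_s\in\kfield((t))\langle\partial_t\rangle\) and \([D_r,E_s]=0\) for all \(r,s\ge0\). The coefficients in fact lie in \(\kfield[[t]]\langle\partial_t\rangle\), but we pass to the field \(K=\kfield((t))\) so that \cref{thm:schur-amitsur} applies.

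The next step is to produce a non-scalar \(E_{s_0}\). Since \(\cR_b1=1\), comparing coefficients gives \(E_s1=0\) for \(s\ge1\). Put \(H(t,b)=(\cR_b t)(t)-t\in\kfield[[t,b]]\). The evaluation identity \((\cR_bf)(0)=f(b)\) from \cref{prop:formal-shift-coefficients}, applied to \(f(t)=t\), gives \(H(0,b)=b\), so \(H\neq0\). Since \(E_0=\id\), we have \(H=\sum_{s\ge1}b^sE_s(t)\). Hence some \(E_{s_0}\) with \(s_0\ge1\) satisfies \(E_{s_0}t\ne0\) and \(E_{s_0}1=0\).

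Such an \(E_{s_0}\) is not an element of \(\kfield\). A nonzero constant \(c\) would give \(c\cdot1\neq0\), and \(0\) would kill \(t\). So \cref{thm:schur-amitsur} applies to \(D=E_{s_0}\) over \(K=\kfield((t))\): all \(D_r\) lie in its centralizer, and therefore \([D_r,D_q]=0\) for all \(r,q\).

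It remains to reassemble these coefficient identities. In \(\kfield[[a,b,t]]\), for every \(f\in\kfield[[t]]\),
\[
 \cL_a\cL_bf=\sum_{r,q}a^rb^qD_rD_qf .
\]
This rearrangement needs a short justification. Each coefficient of a monomial \(a^rb^q\) involves only finitely many terms. Moreover, \(D_r\) preserves \(\kfield[[t]]\), since it is built from traces over \(\kfield[[t]][a]/(a^{r+1})\). So the double sum is a legitimate formal identity, and the left shifts commute on \(\kfield[[t]]\) as identities in \(\kfield[[a,b,t]]\). Then \cref{lem:commuting-shifts}\textup{(ii)}, in its formal case, evaluates at \(t=0\) with \(f=t^j\), \(1\le j\le n\). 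Equal power sums together with Newton's identities (legitimate over \(\mathbb Q\)) then give \(P(Z;x,y)=P(Z;y,x)\).

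I expect the main obstacle to be mild bookkeeping rather than new ideas. One needs that the formal shift coefficients genuinely form a commuting family whose products reassemble into \(\cL_a\cL_b\), and that passing from \(\kfield[[t]]\) to \(\kfield((t))\) loses nothing. Both are covered by the finite-order property of each \(D_r\) and by the injectivity of \(\kfield[[t]]\hookrightarrow\kfield((t))\).
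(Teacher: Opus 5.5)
Your proof is correct and follows the paper's argument: Schur--Amitsur over \(\kfield((t))\) applied to a nonscalar right-shift coefficient, reassembly into commuting left shifts, and then \cref{lem:commuting-shifts}\textup{(ii)}. The only minor difference is that the paper takes \(E_1\) directly, since the unit identity gives \((E_1t)(0)=1\), whereas you locate some nonscalar \(E_{s_0}\) through \(H\neq0\), as in the algebraic proof; your extra bookkeeping for the reassembly is a harmless elaboration of what the paper states in one line.
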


\begin{proof}
For \(f_0(t)=t\), the series \((\cR_bf_0)(t)\) belongs to
\(\kfield[[t,b]]\), so coefficient extraction commutes with evaluation at
\(t=0\). The unit identity therefore gives \((E_1f_0)(0)=1\).
Moreover, \(E_1(1)=0\), since \(\cR_b1=1\), so \(E_1\) is nonscalar.
By \cref{prop:formal-shift-coefficients}, every \(D_r\) belongs to its
centralizer in \(\kfield((t))\langle\partial_t\rangle\), which is
commutative by \cref{thm:schur-amitsur}. Hence all \(D_r\) commute, and
therefore so do the left shifts on \(\kfield[[t]]\).
\Cref{lem:commuting-shifts}\textup{(ii)} proves the claim.
\end{proof}


\begin{proof}[Proof of \cref{thm:formal-commutativity}]

Write
\[
 \delta_i(x,y)
 =\theta_i(x,y)-\theta_i(y,x)
 =\sum_{r,s\ge0}c_{i,r,s}x^ry^s,
 \qquad c_{i,r,s}\in R.
\]
We prove that every coefficient \(c_{i,r,s}\) belongs to the nilradical of
\(R\).

Fix a prime ideal \(\mathfrak p\in\Spec R\), and put
\[
 \kappa(\mathfrak p)=\Frac(R/\mathfrak p).
\]
Let
\[
 \lambda_{\mathfrak p}\colon R\longrightarrow\kappa(\mathfrak p)
\]
be the canonical homomorphism. It induces the coefficientwise homomorphism
\[
 \lambda_{\mathfrak p,*}\colon R[[x,y]][Z]
 \longrightarrow\kappa(\mathfrak p)[[x,y]][Z],
 \qquad
 \sum a_{r,s,j}x^ry^sZ^j
 \longmapsto
 \sum \lambda_{\mathfrak p}(a_{r,s,j})x^ry^sZ^j.
\]
Set
\[
 P_{\mathfrak p}(Z;x,y)=\lambda_{\mathfrak p,*}\bigl(P(Z;x,y)\bigr).
\]
The associativity identity \eqref{eq:formal-associativity-resultant} and the
unit identities are coefficientwise polynomial identities. Applying
\(\lambda_{\mathfrak p,*}\) to both sides preserves these identities.
Consequently \(P_{\mathfrak p}\) is an associative formal \(n\)-valued
multiplication over \(\kappa(\mathfrak p)\) with a strong two-sided unit.
Moreover,
\(\kappa(\mathfrak p)\) has characteristic zero because \(R\) is a
\(\mathbb Q\)-algebra. Hence
\cref{cor:formal-field-commutativity} gives
\[
 P_{\mathfrak p}(Z;x,y)=P_{\mathfrak p}(Z;y,x).
\]

Comparing the coefficient of \(Z^{n-i}x^ry^s\) in this equality gives
\[
 \lambda_{\mathfrak p}(c_{i,r,s})=0.
\]
The kernel of \(\lambda_{\mathfrak p}\) is exactly \(\mathfrak p\), so
\(c_{i,r,s}\in\mathfrak p\). Since \(\mathfrak p\) was arbitrary,
\[
 c_{i,r,s}\in
 \bigcap_{\mathfrak p\in\Spec R}\mathfrak p
 =\sqrt{(0)}.
\]
Thus every coefficient of \(P(Z;x,y)-P(Z;y,x)\) belongs to the nilradical of
\(R\). If \(R\) is reduced, then \(\sqrt{(0)}=0\), so all
\(c_{i,r,s}\) vanish and
\[
 P(Z;x,y)=P(Z;y,x).
\]
This proves \cref{thm:formal-commutativity}.

\end{proof}

\section{Auxiliary results for the classification}\label{sec:aux-cyclicity}

Let \(R\) be a commutative \(\mathbb Q\)-algebra which is an integral domain
and put \(K=\Frac(R)\). By \cref{thm:formal-commutativity}, every associative
formal \(n\)-valued multiplication over \(R\) with a strong two-sided unit is
commutative. Kholodov's generator theory is therefore available without an
additional commutativity hypothesis.

The generator is the coefficient of the translation parameter in
\eqref{eq:formal-translation}:
\begin{equation}\label{eq:generator-definition}
 (Df)(t)=\left.\frac{\partial}{\partial y}(\cT_yf)(t)\right|_{y=0},
 \qquad f\in R[[t]].
\end{equation}
We use the following facts from Kholodov's generator theory. First, \(D\)
is defined over \(R\) and has finite differential order at most \(n\):
\begin{equation}\label{eq:generator}
 D=\sum_{i=1}^{m}a_i(t)\frac{\dd^i}{\dd t^i},
 \qquad 1\le m\le n,
 \qquad a_m\ne0;
\end{equation}
see \parencite[Theorem~2.2]{Kholodov1982}.  We call \(m\) the \emph{rank} of
the law.  Second, invariance of \(D\) with respect to \(\cT_y\) gives, for
\(u(x,y)=\cT_yf(x)\),
\begin{equation}\label{eq:translation-equation}
 D_xu(x,y)=D_yu(x,y),
\end{equation}
where
\[
 D_x=\sum_{i=1}^{m}a_i(x)\partial_x^i,
 \qquad
 D_y=\sum_{i=1}^{m}a_i(y)\partial_y^i.
\]
The Cauchy datum is \(u(x,0)=f(x)\); compare
\parencite[Theorem~3.9]{Kholodov1982}. Third, the opening display in
Kholodov's proof of Theorem~3.9 \parencite[p.~276]{Kholodov1982} gives
\[
 a_i(t)=\frac{a_i^{(i-1)}(0)}{(i-1)!}t^{i-1}+O(t^i),
\]
and hence
\begin{equation}\label{eq:vanishing-ai}
 a_i(t)\in t^{i-1}R[[t]]
 \qquad(1\le i\le m).
\end{equation}
For a strong coordinate \(X=B(t)\), the transformed law has branches
\(B(z_\nu(B^{-1}(X),B^{-1}(Y)))\). Denote their normalized averaging
operator by \(\widetilde{\cT}_Y=\id+Y\widetilde D+O(Y^2)\), so that
\(\widetilde D\) is its coefficient of \(Y\). Applying \(B\) to each branch gives
\begin{equation}\label{eq:translation-coordinate-change}
 (\widetilde{\cT}_{B(y)}f)(B(t))=\cT_y(f\circ B)(t).
\end{equation}
Taking the coefficient of \(y\), and using \(B'(0)=1\), yields
\begin{equation}\label{eq:generator-coordinate-change}
 (\widetilde Df)(B(t))=D(f\circ B)(t).
\end{equation}
This is the precise change-of-coordinate rule for the generator. Similarly,
\eqref{eq:translation-repetition} shows that repeating all branches \(d\)
times leaves the translation, and therefore its coefficient \(D\), unchanged.
For the cyclic law \(\mathbb G_r\), the coefficient of
\(\dd^r/\dd t^r\) is
\begin{equation}\label{eq:cyclic-top}
 a_r^{\mathrm{cyc}}(t)=\frac{r^r}{r!}t^{r-1};
\end{equation}
see \parencite[Lemma~3.3]{Kholodov1982}.

\begin{lemma}[Formal exponential independence]\label[lemma]{lem:exp-independence}
Let \(L\) be a field of characteristic zero, let \(\alpha_1,\dots,\alpha_s\in L\) be pairwise distinct, and let \(q_1(T),\dots,q_s(T)\in L[T]\) be arbitrary polynomials. If
\begin{equation}\label{eq:exp-relation}
 \sum_{i=1}^s e^{\alpha_iT}q_i(T)=0
 \qquad\text{in }L[[T]],
\end{equation}
then \(q_i=0\) for every \(i\).
\end{lemma}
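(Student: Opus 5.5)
The plan is to prove the lemma by turning the relation \eqref{eq:exp-relation} into a linear differential identity and killing the terms one at a time with a suitable operator. Put \(\partial=\dd/\dd T\) on \(L[[T]]\). The formal exponential \(e^{\alpha T}=\sum_k\alpha^kT^k/k!\) is the unique series with constant term \(1\) satisfying \(\partial e^{\alpha T}=\alpha e^{\alpha T}\), and it obeys \(e^{\alpha T}e^{\beta T}=e^{(\alpha+\beta)T}\). Both facts are checked by comparing coefficients, using that \(L\) has characteristic zero so the factorials are invertible. From these one gets the conjugation rule
\[
 (\partial-\beta)\bigl(e^{\alpha T}q(T)\bigr)=e^{\alpha T}\bigl(q'(T)+(\alpha-\beta)q(T)\bigr).
\]

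I will argue by induction on \(N=\sum_i(\deg q_i+1)\), where the zero polynomial contributes nothing and is dropped. Suppose a nontrivial relation exists and choose one with \(N\) minimal. After discarding zero terms, assume all \(q_i\neq0\).

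\begin{enumerate}[label=\textup{(\arabic*)},leftmargin=*]
\item If \(s=1\), the relation reads \(e^{\alpha_1T}q_1=0\). Since \(e^{\alpha_1T}\) has constant term \(1\), it is a unit in \(L[[T]]\), so \(q_1=0\), a contradiction.
\item If \(s\ge2\), apply \(\partial-\alpha_s\) to the relation. By the conjugation rule, the \(s\)-th term becomes \(e^{\alpha_sT}q_s'\), and its degree drops by one (or the term vanishes if \(q_s\) is constant). For \(i<s\), the \(i\)-th term becomes \(e^{\alpha_iT}\bigl(q_i'+(\alpha_i-\alpha_s)q_i\bigr)\).
\item For \(i<s\), the new polynomial has the same degree as \(q_i\) and is nonzero, because \(\alpha_i-\alpha_s\ne0\) by pairwise distinctness.
\item The new relation therefore has strictly smaller \(N\) and is still nontrivial, since its first term is nonzero. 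This contradicts minimality.
\end{enumerate}

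The only delicate point is the algebraic justification of the formal rules for \(e^{\alpha T}\) in \(L[[T]]\): the derivative identity, the product identity, and the invertibility of \(e^{\alpha T}\). These are routine coefficient comparisons using \(\operatorname{char}L=0\). I therefore expect no real obstacle. The key structural input is step (3), where distinctness of the \(\alpha_i\) guarantees that the surviving terms keep their degree while the \(s\)-th term loses one, so the induction strictly descends.
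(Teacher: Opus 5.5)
Your proof is correct and uses the same mechanism as the paper: the conjugation rule $(\partial-\beta)(e^{\alpha T}q)=e^{\alpha T}\bigl(q'+(\alpha-\beta)q\bigr)$, combined with the fact that $q\mapsto q'+cq$ preserves degree when $c\neq0$. The only difference is packaging. The paper applies the full annihilator $\prod_{j\neq i}(\dd/\dd T-\alpha_j)^{d+1}$ at once and then uses injectivity of $\dd/\dd T+c$ on polynomials of degree at most $d$ to isolate each $q_i$, whereas you apply one factor $\partial-\alpha_s$ per step of a minimal-counterexample descent on $N$.
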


\begin{proof}
Choose \(d\geq0\) so that every \(q_j\) has degree at most \(d\); such a bound
exists because there are only finitely many polynomials. Fix \(i\). Apply the
constant-coefficient differential operator
\[
 \mathcal{L}_i
 =\prod_{j\neq i}\left(\frac{\dd}{\dd T}-\alpha_j\right)^{d+1}
\]
to \eqref{eq:exp-relation}. For \(j\neq i\), the factor
\[
 \left(\frac{\dd}{\dd T}-\alpha_j\right)^{d+1}
\]
annihilates \(e^{\alpha_jT}q_j(T)\), because it becomes \(e^{\alpha_jT}q_j^{(d+1)}(T)\). Hence
\[
 e^{\alpha_iT}
 \prod_{j\neq i}
 \left(\frac{\dd}{\dd T}+\alpha_i-\alpha_j\right)^{d+1}
 q_i(T)=0.
\]
For a nonzero scalar \(c\in L\), the operator \(\dd/\dd T+c\) is invertible on the finite-dimensional vector space of polynomials of degree at most \(d\): in the monomial basis it is triangular with diagonal entry \(c\). Since \(\alpha_i-\alpha_j\neq 0\), the displayed product is injective on that space. Thus \(q_i=0\).
\end{proof}

\begin{lemma}[A Puiseux solution of the transport equation]\label[lemma]{lem:transport}
Let \((L,\partial_u)\) be a differential field of characteristic zero containing an element \(u\) with \(\partial_u u=1\). Let \(\lambda\in L\) satisfy \(\partial_u\lambda=0\), and let
\[
 w(u,v)\in \bigcup_{N\geq 1}L[[v^{1/N}]],
\]
where \(\partial_u\) acts coefficientwise. If
\begin{equation}\label{eq:transport}
 \partial_vw=\lambda\partial_uw,
 \qquad w(u,0)=u,
\end{equation}
then
\[
 w(u,v)=u+\lambda v.
\]
\end{lemma}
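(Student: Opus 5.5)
We prove that the difference \(r(u,v)=w(u,v)-u-\lambda v\) vanishes. Since \(\partial_u\lambda=0\) and \(\partial_u u=1\), we have \(\partial_u(u+\lambda v)=\lambda\cdot 0+1\cdot 0\)? More carefully, \(\partial_u(u+\lambda v)=1\) and \(\partial_v(u+\lambda v)=\lambda\). Hence \(u+\lambda v\) solves \eqref{eq:transport}. The equation is linear in \(w\), so \(r\) satisfies
\[
 \partial_v r=\lambda\,\partial_u r,
 \qquad r(u,0)=0 .
\]
Here \(\partial_u\) acts coefficientwise, and \(\partial_v\) acts termwise on \(v^{k/N}\).

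Fix \(N\) with \(r\in L[[v^{1/N}]]\) and write
\[
 r=\sum_{k\ge0}r_k\,v^{k/N},
 \qquad r_k\in L .
\]
The initial condition means the constant term vanishes, \(r_0=0\); that is how \(r(u,0)\) is read for a Puiseux series. Termwise differentiation gives
\[
 \partial_v r=\sum_{k\ge1}\frac{k}{N}\,r_k\,v^{k/N-1}
 =\sum_{j\ge 1-N}\frac{j+N}{N}\,r_{j+N}\,v^{j/N}.
\]
The right-hand side is
\[
 \lambda\,\partial_u r=\sum_{k\ge0}\lambda\,\partial_u(r_k)\,v^{k/N},
\]
using \(\partial_u\lambda=0\) only to keep \(\lambda\) outside; actually only linearity is needed here.

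Comparing coefficients of \(v^{j/N}\):
\begin{itemize}
 \item For \(1-N\le j\le -1\), the right-hand side has no negative powers, so \(\frac{j+N}{N}r_{j+N}=0\). Since the characteristic is zero, \(r_1=\dots=r_{N-1}=0\).
 \item For \(j\ge0\), we obtain the recursion
 \[
  r_{j+N}=\frac{N}{j+N}\,\lambda\,\partial_u r_j .
 \]
\end{itemize}

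We then induct on \(k\) to show \(r_k=0\) for all \(k\). The base cases \(0\le k\le N-1\) were just established. For \(k\ge N\), write \(k=j+N\). The recursion expresses \(r_k\) through \(\partial_u r_j\), and \(r_j=0\) by the inductive hypothesis, so \(r_k=0\). Hence \(r=0\) and \(w=u+\lambda v\).

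The only delicate point is the interpretation of the initial condition and of the exponent bookkeeping. If "\(w(u,0)=u\)" is read as a statement about the constant term, the fractional terms \(v^{k/N}\) with \(0<k<N\) are not excluded by the initial condition. They are instead killed by the first bullet, since their \(v\)-derivatives produce negative exponents that cannot be matched on the right-hand side. This step genuinely uses characteristic zero, through the nonvanishing of the factors \(k/N\).

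There is no convergence issue. Every coefficient comparison is a finite identity in \(L\).
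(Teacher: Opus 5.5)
Your proof is correct and is essentially the paper's argument. Both compare coefficients of $v^{k/N}$: the negative exponents produced by $\partial_v$ kill the terms with $0<k<N$, and every remaining term is then forced to vanish. You package this as the recursion $r_{j+N}=\frac{N}{j+N}\lambda\,\partial_u r_j$ for $r=w-u-\lambda v$ followed by induction, whereas the paper first reads off $c_N=\lambda$ and then argues by contradiction from the least exponent of the remainder.
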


\begin{proof}
Choose \(N\) such that
\[
 w(u,v)=u+\sum_{k\geq 1}c_kv^{k/N},
 \qquad c_k\in L.
\]
If some \(c_k\) with \(1\leq k<N\) were nonzero, choose the least such \(k\). The left-hand side of \eqref{eq:transport} would then have a nonzero term of \(v\)-order \(k/N-1<0\), whereas the right-hand side has no term of negative \(v\)-order. Hence \(c_k=0\) for \(k<N\).

Comparing the coefficient of \(v^0\) in \eqref{eq:transport} gives \(c_N=\lambda\). Subtract \(u+\lambda v\) from \(w\). If the remainder were nonzero, choose its least exponent \(k/N>1\). Differentiation with respect to \(v\) lowers this exponent by one, whereas coefficientwise differentiation with respect to \(u\) leaves it unchanged. The least \(v\)-orders on the two sides would therefore be different, a contradiction. Thus the remainder is zero.
\end{proof}

\begin{lemma}[Ramified substitution and evaluation]\label[lemma]{lem:ramified-evaluation}
Let \(C(t)\in K[[t]]\), let \(Q(t)\in tK[[t]]\) have linear coefficient \(1\), and put
\[
 q(s)=Q(s^m).
\]
Then the substitution
\begin{equation}\label{eq:iota}
 \iota\colon K[[x,y]]\longrightarrow K[[u,v]],
 \qquad x\longmapsto q(u),\quad y\longmapsto q(v),
\end{equation}
is injective.

Let \(L\) be a field containing \(K((u))\), and suppose that
\[
 \widehat z=q(u)+\delta,
 \qquad
 \delta\in v^{1/N}L[[v^{1/N}]]
\]
for some \(N\geq1\). Then \(C(\widehat z)\) is unambiguously defined by the Taylor expansion
\begin{equation}\label{eq:taylor-evaluation}
 C(\widehat z)
 =\sum_{j\geq0}\frac{C^{(j)}(q(u))}{j!}\delta^j
 \in L[[v^{1/N}]].
\end{equation}
The same convention respects sums, products, and composition of one-variable formal series whenever the indicated expressions are defined.
\end{lemma}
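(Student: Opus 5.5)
The statement has two parts: injectivity of the substitution \(\iota\), and well-definedness plus functoriality of the Taylor evaluation \(C(\widehat z)\). I would treat them in that order.

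\emph{Injectivity.} Write \(q(s)=s^m+O(s^{m+1})\), which holds because \(Q\) has linear coefficient \(1\). Given a nonzero \(F=\sum c_{r,s}x^ry^s\in K[[x,y]]\), I would order monomials by total degree and, within a degree, lexicographically. Let \((r_0,s_0)\) be the minimal exponent with \(c_{r_0,s_0}\neq0\) in this order. Then \(\iota(x^ry^s)=u^{mr}v^{ms}(1+O(u))(1+O(v))\), whose lowest-degree part is \(u^{mr}v^{ms}\).

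For a term of total degree \(r+s>r_0+s_0\), every monomial of \(\iota(x^ry^s)\) has total degree at least \(m(r+s)>m(r_0+s_0)\). Terms of the same total degree \(r+s=r_0+s_0\) contribute distinct leading monomials \(u^{mr}v^{ms}\) in degree \(m(r_0+s_0)\). Their higher corrections all lie in degree greater than \(m(r_0+s_0)\). So the coefficient of \(u^{mr_0}v^{ms_0}\) in \(\iota(F)\) is exactly \(c_{r_0,s_0}\neq0\). A cleaner alternative is to note that \(\iota\) factors as \(x\mapsto Q(x)\), \(y\mapsto Q(y)\), which is an automorphism of \(K[[x,y]]\), followed by \(x\mapsto u^m\), \(y\mapsto v^m\), which is obviously injective. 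I would present this factorization.

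\emph{Evaluation.} Since \(\delta\in v^{1/N}L[[v^{1/N}]]\), we have \(\delta^j\in v^{j/N}L[[v^{1/N}]]\). Hence each coefficient of \(v^{k/N}\) in \(\sum_j C^{(j)}(q(u))\delta^j/j!\) receives contributions only from \(j\le k\), a finite sum. Each \(C^{(j)}(q(u))\) is a legitimate element of \(K[[u]]\subset L\), because \(q(u)\in uK[[u]]\) makes substitution into a power series convergent in the \(u\)-adic topology. Thus the series is a well-defined element of \(L[[v^{1/N}]]\).

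Independence of \(N\) follows because replacing \(N\) by a multiple \(NN'\) only reindexes the same expansion. The expression also depends only on \(\widehat z\) and not on its splitting. Indeed, \(q(u)\) is forced as the \(v^0\)-coefficient of \(\widehat z\), since \(\delta\) has no constant term.

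\emph{Compatibilities.} For sums and products, the Taylor operator \(C\mapsto\sum_j C^{(j)}(a)\delta^j/j!\) is a ring homomorphism \(K[[t]]\to L[[v^{1/N}]]\) by the Leibniz rule. I would verify this formally by truncating modulo \(v^{M/N}\), where only finitely many terms matter. In that truncation the identity reduces to the polynomial identity \((CD)(a+\delta)=C(a+\delta)D(a+\delta)\), which holds coefficientwise.

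For composition \(C\circ G\) with \(G(0)=0\), I would write \(G(\widehat z)=G(q(u))+\delta'\) with \(\delta'\) again of positive \(v\)-order. Then \(C(G(\widehat z))\) is defined by the same convention. Equality with \((C\circ G)(\widehat z)\) follows modulo \(v^{M/N}\) from the finite Taylor formula for compositions, exactly as in \cref{prop:trace-differential}.

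\emph{Main obstacle.} The delicate point is the composition statement. The inner point \(G(q(u))\) must lie where \(C\) can be expanded, namely in the maximal ideal of \(K[[u]]\), so that \(C^{(j)}(G(q(u)))\) makes sense. I would handle this by restricting to series \(G\) with \(G(0)=0\), which is what "whenever defined" covers. The remaining verification is a routine truncation argument.
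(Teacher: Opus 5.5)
Your proposal is correct and takes the paper's approach: the factorization of \(\iota\) through the automorphism \(x\mapsto Q(x)\), \(y\mapsto Q(y)\), followed by \(x\mapsto u^m\), \(y\mapsto v^m\), is exactly the paper's injectivity argument, and well-definedness via \(C^{(j)}(q(u))\in K[[u]]\) plus finiteness below each \(v\)-order, with compatibilities from formal Taylor identities, is also how the paper argues. Your direct leading-term argument, the independence-of-\(N\) remark, and the observation that the composition convention needs base points \(G(q(u))\) in the maximal ideal of \(K[[u]]\) are correct extra detail that the paper leaves implicit.
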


\begin{proof}
Write \(U=u^m\) and \(V=v^m\). Substitution by \(Q\) is an isomorphism
\(K[[x,y]]\longrightarrow K[[U,V]]\), defined by
\(x\longmapsto Q(U)\) and \(y\longmapsto Q(V)\), because \(Q\) has a
compositional inverse in \(tK[[t]]\). The homomorphism
\[
 K[[U,V]]\longrightarrow K[[u,v]],
 \qquad U\longmapsto u^m,\quad V\longmapsto v^m,
\]
is injective. Its composite with the preceding isomorphism is
\eqref{eq:iota}.

Since \(q(u)\in u^mK[[u]]\), every coefficient \(C^{(j)}(q(u))\) belongs to \(K[[u]]\subset L\). Because \(\delta\) has strictly positive \(v\)-order, only finitely many summands in \eqref{eq:taylor-evaluation} contribute below any prescribed \(v\)-order. Thus the sum is a well-defined Puiseux power series. The compatibility with the ring operations and composition follows from the formal Taylor identities.
\end{proof}

\section{Proof of the classification}\label{sec:proof-cyclicity}

By \cref{thm:formal-commutativity}, the law is commutative, so all the
Kholodov inputs from \cref{sec:aux-cyclicity} apply.

Retain the notation \(K=\Frac(R)\) and put
\[
 F=\Frac(K[[x,y]]).
\]
Fix a splitting field \(M/F\) for \(P(Z; x, y)\).  In \(M[Z]\)
write
\[
 P(Z;x,y)=\prod_{\nu=1}^{s}(Z-z_\nu(x,y))^{d_\nu},
 \qquad d_\nu\ge1,
 \qquad \sum_{\nu=1}^{s}d_\nu=n,
\]
where \(z_1,\ldots,z_s\) are pairwise distinct.  Characteristic zero makes
\(M/F\) separable, so the derivations \(\partial_x\) and \(\partial_y\)
extend uniquely to \(M\).

\subsection*{Step 1: the translation equation separates branch by branch}\label{subsec:branch-separation}

Let \(D\) be the generator \eqref{eq:generator}, and let \(T\) be a new formal
variable. Apply the translation equation \eqref{eq:translation-equation}
coefficientwise to \(f(Z)=e^{TZ}\).  After multiplying by \(n\), and grouping
equal roots before applying exponential independence, we obtain
\begin{equation}\label{eq:sum-exp}
 \sum_{\nu=1}^s d_\nu
 \left(D_x-D_y\right)e^{Tz_\nu}=0.
\end{equation}
For each branch, repeated use of the chain rule gives
\[
 D_xe^{Tz_\nu}=e^{Tz_\nu}Q_{\nu,x}(T),
 \qquad
 D_ye^{Tz_\nu}=e^{Tz_\nu}Q_{\nu,y}(T),
\]
where \(Q_{\nu,x}\) and \(Q_{\nu,y}\) are polynomials in \(T\) of degree at most \(m\). Their leading coefficients are
\[
 [T^m]Q_{\nu,x}(T)
 =a_m(x)\bigl(\partial_xz_\nu\bigr)^m,
 \qquad
 [T^m]Q_{\nu,y}(T)
 =a_m(y)\bigl(\partial_yz_\nu\bigr)^m.
\]
Indeed, the only contribution to the coefficient of \(T^m\) comes from the top-order term \(a_m\partial^m\) and from differentiating the exponential factor through \(z_\nu\) in every differentiation.

Equation \eqref{eq:sum-exp} is therefore
\[
 \sum_{\nu=1}^s e^{Tz_\nu}d_\nu Q_\nu(T)=0,
 \qquad
 Q_\nu=Q_{\nu,x}-Q_{\nu,y}.
\]
The roots \(z_\nu\) are pairwise distinct, so
\cref{lem:exp-independence} gives \(d_\nu Q_\nu=0\).  Since \(K\) has
characteristic zero, \(d_\nu\ne0\), and hence \(Q_\nu=0\). Comparing the
coefficients of \(T^m\) gives the branchwise identity
\begin{equation}\label{eq:top-branchwise}
 a_m(x)\bigl(\partial_xz_\nu\bigr)^m
 =a_m(y)\bigl(\partial_yz_\nu\bigr)^m
 \qquad (\nu=1,\dots,s).
\end{equation}

\subsection*{Step 2: the top coefficient has exactly cyclic vanishing order}\label{subsec:cyclic-order}

Let
\[
 r=\ord_t a_m(t).
\]
By \eqref{eq:vanishing-ai}, \(r\geq m-1\). We now regard \(x\) as a generic parameter. The natural embedding \(\Frac(K[[x,y]])\longrightarrow K((x))((y))\) extends to an embedding of the chosen splitting field into an algebraic closure. Let \(L\) be an algebraic closure of \(K((x))\). By the Newton--Puiseux theorem, every root of \(P\), viewed as a polynomial over \(L[[y]]\), lies in \(L((y^{1/N}))\) for some \(N\); see \parencite[Chap.~IV, \S2, Prop.~8]{Serre1979}. Because \(P\) is monic, every root has nonnegative \(y\)-order, and reduction of its equation modulo \(y\), using \eqref{eq:formal-unit}, shows that its constant term is \(x\).

No branch is identically equal to \(x\): if \(z_\nu=x\), then \(\partial_xz_\nu=1\) and \(\partial_yz_\nu=0\), contradicting \eqref{eq:top-branchwise}. Hence, for every \(\nu\), there are \(\alpha_\nu\in\mathbb{Q}_{>0}\) and \(b_\nu\in L^\times\) such that
\begin{equation}\label{eq:puiseux-z}
 z_\nu=x+b_\nu y^{\alpha_\nu}+\text{terms of larger \(y\)-order}.
\end{equation}
It follows that
\begin{equation}\label{eq:derivative-orders}
 \ord_y(\partial_xz_\nu)=0,
 \qquad
 \ord_y(\partial_yz_\nu)=\alpha_\nu-1.
\end{equation}
The second equality uses characteristic zero, so that \(\alpha_\nu b_\nu\neq 0\).

Writing \(a_m(t)=ct^r+O(t^{r+1})\) and taking \(y\)-orders in \eqref{eq:top-branchwise}, we obtain
\begin{equation}\label{eq:valuation-equation}
 0=r+m(\alpha_\nu-1).
\end{equation}
Since \(\alpha_\nu>0\), this implies \(r<m\). Together with \(r\geq m-1\), it gives
\begin{equation}\label{eq:r-alpha}
 r=m-1,
 \qquad
 \alpha_\nu=\frac{1}{m}
 \quad\text{for every }\nu.
\end{equation}
Thus
\begin{equation}\label{eq:am-A}
 a_m(t)=ct^{m-1}A(t),
 \qquad c\in K^\times,
 \qquad A(t)\in 1+tK[[t]].
\end{equation}

\subsection*{Step 3: normalize the top coefficient in a ramified coordinate}\label{subsec:ramified-coordinate}

There is a unique series \(H(t)\in 1+tK[[t]]\) satisfying
\begin{equation}\label{eq:H-general}
 H(t)+mtH'(t)=A(t)^{-1/m}.
\end{equation}
Indeed, if \(A(t)^{-1/m}=\sum_{k\geq 0}g_kt^k\) and \(H(t)=\sum_{k\geq 0}h_kt^k\), then
\[
 h_k=\frac{g_k}{1+mk}.
\]
Define
\begin{equation}\label{eq:B-xi}
 B(t)=tH(t)^m,
 \qquad
 \xi(t)=t^{1/m}H(t),
\end{equation}
so that \(\xi(t)^m=B(t)\) and \(B(t)=t+O(t^2)\). Differentiating \(\xi\) and using \eqref{eq:H-general}, we find
\begin{equation}\label{eq:xi-prime}
 \xi'(t)
 =\frac{1}{m}t^{-(m-1)/m}A(t)^{-1/m}.
\end{equation}
Consequently,
\begin{equation}\label{eq:constant-top}
 a_m(t)\bigl(\xi'(t)\bigr)^m=\frac{c}{m^m}.
\end{equation}

Let
\begin{equation}\label{eq:q}
 q(s)=B^{-1}(s^m).
\end{equation}
Then \(q(s)=s^m+O(s^{2m})\) and
\begin{equation}\label{eq:xi-q}
 \xi(q(s))=s.
\end{equation}
Indeed, \(\xi(q(s))^m=B(q(s))=s^m\), while \(\xi(q(s))/s\) has constant term \(1\); the unique \(m\)th root of unity in \(1+sK[[s]]\) is \(1\).
Put
\begin{equation}\label{eq:uv}
 x=q(u),
 \qquad
 y=q(v).
\end{equation}

\subsection*{Step 4: define the normalized roots without ambiguity}\label{subsec:normalized-roots}

Write \(Q=B^{-1}\), so that \(q(s)=Q(s^m)\). The injective substitution
\eqref{eq:iota} extends to an embedding
\[
 \iota\colon F\longrightarrow
 \widehat F:=\Frac(K[[u,v]])\subset K((u))((v)).
\]
Let \(\Omega\) be an algebraic closure of \(K((u))((v))\), and choose an extension
\[
 \widetilde\iota\colon M\longrightarrow\Omega
\]
of this embedding (recall that \(M/F\) is the splitting field for \(P(Z; x, y)\)). The derivations \(\partial_u\) and \(\partial_v\) extend uniquely
from \(K((u))((v))\) to \(\Omega\). Put
\[
 \widehat P(Z;u,v)=P(Z;q(u),q(v)),
 \qquad
 \widehat z_\nu(u,v)=\widetilde\iota\bigl(z_\nu(x,y)\bigr).
\]
Because \(\widetilde\iota\) is a field embedding, the elements
\(\widehat z_1,\ldots,\widehat z_s\) remain pairwise distinct, and
\[
 \widehat P(Z;u,v)
 =\prod_{\nu=1}^{s}(Z-\widehat z_\nu(u,v))^{d_\nu}.
\]

Regard \(u\) as a generic parameter. Since
\[
 \widehat P(Z;u,0)=(Z-q(u))^n,
\]
the Newton--Puiseux theorem \parencite[Chap.~IV, \S2, Prop.~8]{Serre1979} gives, after a finite algebraic extension \(L/K((u))\) and a common ramification \(v^{1/N}\),
\begin{equation}\label{eq:zhat-expansion}
 \widehat z_\nu=q(u)+\delta_\nu,
 \qquad
 \delta_\nu\in v^{1/N}L[[v^{1/N}]].
\end{equation}
Enlarge \(L\), if necessary, so that it contains all \(m\)th roots of unity. The derivation \(\partial_u\) extends uniquely to \(L\), because the extension is separable; every element of \(\muN_m\) is constant for this derivation.
All evaluations of one-variable formal series at \eqref{eq:zhat-expansion} are understood in the precise sense of Lemma~\ref{lem:ramified-evaluation}.

Since \(B(q(u))=u^m\), we can write
\begin{equation}\label{eq:Bzhat-eta}
 B(\widehat z_\nu)=u^m(1+\eta_\nu),
 \qquad
 \eta_\nu\in v^{1/N}L[[v^{1/N}]].
\end{equation}
Define
\begin{equation}\label{eq:w-definition}
 w_\nu(u,v)=u(1+\eta_\nu)^{1/m},
\end{equation}
where the binomial series is chosen with constant term \(1\). Then
\begin{equation}\label{eq:w-properties}
 w_\nu^m=B(\widehat z_\nu),
 \qquad
 w_\nu(u,0)=u.
\end{equation}
Conversely, these two properties characterize \(w_\nu\) uniquely: all other \(m\)th roots specialize at \(v=0\) to \(\zeta u\) with \(\zeta\in\muN_m\) and \(\zeta\neq1\). Finally, because \(q(s)=B^{-1}(s^m)\) and formal evaluation respects composition,
\begin{equation}\label{eq:z-qw}
 \widehat z_\nu=q(w_\nu).
\end{equation}

\subsection*{Step 5: the normalized branches are linear}\label{subsec:linear-branches}

Differentiate \eqref{eq:z-qw}, keeping the other ramified coordinate fixed. The
original and substituted derivations are related by
\[
 \partial_u\widehat z_\nu
 =q'(u)\,\widetilde\iota\bigl(\partial_xz_\nu\bigr),
 \qquad
 \partial_v\widehat z_\nu
 =q'(v)\,\widetilde\iota\bigl(\partial_yz_\nu\bigr).
\]
Indeed, the two derivations
\[
 \partial_u\circ\widetilde\iota
 \qquad\text{and}\qquad
 q'(u)\,\widetilde\iota\circ\partial_x
\]
from \(M\) to \(\Omega\) agree on \(F\), and hence agree on \(M\) because \(M/F\) is
separable. The second identity is obtained in the same way. Since
\(\widehat z_\nu=q(w_\nu)\), we obtain
\begin{equation}\label{eq:chain-x}
 \widetilde\iota\bigl(\partial_xz_\nu\bigr)
 =\frac{q'(w_\nu)}{q'(u)}\,\partial_uw_\nu,
 \qquad
 \widetilde\iota\bigl(\partial_yz_\nu\bigr)
 =\frac{q'(w_\nu)}{q'(v)}\,\partial_vw_\nu.
\end{equation}
Differentiating \eqref{eq:xi-q} gives
\[
 \xi'(q(s))q'(s)=1.
\]
Combining this with \eqref{eq:constant-top}, we obtain
\begin{equation}\label{eq:a-q}
 \frac{a_m(q(s))}{q'(s)^m}=\frac{c}{m^m}.
\end{equation}
Apply \(\widetilde\iota\) to the branchwise identity \eqref{eq:top-branchwise}, substitute \eqref{eq:chain-x}, and then use \eqref{eq:a-q} for \(s=u\) and \(s=v\). Since \(q'(w_\nu)\) has nonzero \(v\)-constant term \(q'(u)\), the common factor
\[
 \frac{c}{m^m}q'(w_\nu)^m
\]
is nonzero and cancels, leaving
\begin{equation}\label{eq:w-powers}
 (\partial_uw_\nu)^m=(\partial_vw_\nu)^m.
\end{equation}
Because \(\partial_uw_\nu(u,0)=1\), the series \(\partial_uw_\nu\) is nonzero and is a unit in the \(v\)-adic Puiseux ring. Therefore
\[
 \rho_\nu:=\frac{\partial_vw_\nu}{\partial_uw_\nu}
\]
is defined and satisfies \(\rho_\nu^m=1\). The coefficient field contains all \(m\)th roots of unity, and the Puiseux field is an integral domain. Since
\[
 T^m-1=\prod_{\lambda\in\muN_m}(T-\lambda)
\]
has distinct roots, we must have
\begin{equation}\label{eq:rho-lambda}
 \rho_\nu=\lambda_\nu
 \quad\text{for some constant }\lambda_\nu\in\muN_m.
\end{equation}
Thus
\[
 \partial_vw_\nu=\lambda_\nu\partial_uw_\nu,
 \qquad
 w_\nu(u,0)=u.
\]
By Lemma~\ref{lem:transport},
\begin{equation}\label{eq:w-linear}
 w_\nu(u,v)=u+\lambda_\nu v.
\end{equation}
Together with \eqref{eq:w-properties}, this yields
\begin{equation}\label{eq:Bz-linear}
 B(\widehat z_\nu(u,v))
 =(u+\lambda_\nu v)^m.
\end{equation}

\subsection*{Step 6: descent in the ramified variables forces equal multiplicities}\label{subsec:count-branches}

If \(\lambda_\nu=\lambda_\mu\), then \eqref{eq:w-linear} gives
\(w_\nu=w_\mu\), and \eqref{eq:z-qw} gives
\(\widehat z_\nu=\widehat z_\mu\).  This contradicts the choice of the
distinct roots when \(\nu\ne\mu\).  Thus the \(\lambda_\nu\) are distinct
elements of \(\muN_m\).

For \(\lambda\in\muN_m\), define its weight by
\[
 e(\lambda)=
 \begin{cases}
 d_\nu,&\lambda=\lambda_\nu\text{ for some }\nu,\\
 0,&\lambda\notin\{\lambda_1,\ldots,\lambda_s\}.
 \end{cases}
\]
Using \eqref{eq:Bz-linear}, the transformed polynomial after the ramified
substitution is
\begin{equation}\label{eq:weighted-normal-form}
 \widehat P_B(W;u,v)
 :=\prod_{\nu=1}^{s}
 \left(W-B(\widehat z_\nu(u,v))\right)^{d_\nu}
 =\prod_{\lambda\in\muN_m}
 \left(W-(u+\lambda v)^m\right)^{e(\lambda)}.
\end{equation}
By the symmetric-series interpretation in \cref{def:strong}, the polynomial
\[
 P_B(W;x,y)=\prod_{\nu=1}^{s}
 \left(W-B(z_\nu(x,y))\right)^{d_\nu}
\]
belongs to \(K[[x,y]][W]\). Equations \eqref{eq:z-qw} and
\eqref{eq:w-linear} give
\(\widehat z_\nu=B^{-1}((u+\lambda_\nu v)^m)\); these are formal series
in \(u,v\) with zero constant term. Thus substitution of \(x=q(u)\) and
\(y=q(v)\) in \(P_B\) gives precisely \(\widehat P_B\).
Since \(q(u)=B^{-1}(u^m)\) and \(q(v)=B^{-1}(v^m)\), its coefficients
belong to \(K[[u^m,v^m]]\). Consequently
\begin{equation}\label{eq:ramified-invariance}
 \widehat P_B(W;\omega u,v)=\widehat P_B(W;u,v)
 \qquad(\omega\in\muN_m).
\end{equation}

The roots \((u+\lambda v)^m\), for \(\lambda\in\muN_m\), are pairwise
distinct: their coefficients of \(u^{m-1}v\) are \(m\lambda\).  Replacing
\(u\) by \(\omega u\) in \eqref{eq:weighted-normal-form} sends the root
indexed by \(\lambda\) to the root indexed by \(\omega^{-1}\lambda\).
Uniqueness of the factorization into these distinct linear factors in \(W\),
together with \eqref{eq:ramified-invariance}, therefore gives
\[
 e(\omega\lambda)=e(\lambda)
 \qquad(\omega,\lambda\in\muN_m).
\]
The multiplication action of \(\muN_m\) on itself is transitive, so every
weight has one common positive value \(d\).  Hence
\begin{equation}\label{eq:n-md}
 s=m,
 \qquad n=md.
\end{equation}

Equation \eqref{eq:weighted-normal-form} now becomes
\begin{equation}\label{eq:identity-after-iota}
 \widehat P_B(W;u,v)=p_m(W;u^m,v^m)^d.
\end{equation}
Since \(u^m=B(q(u))\), \(v^m=B(q(v))\), and the substitution \(\iota\) is
injective by \cref{lem:ramified-evaluation}, this is the image of the identity
\begin{equation}\label{eq:classification-over-K}
 \prod_{\nu=1}^{s}
 \left(W-B(z_\nu(x,y))\right)^{d_\nu}
 =p_m(W;B(x),B(y))^d
\end{equation}
in \(K[[x,y]][W]\).  Thus the classification already holds over \(K\).

\subsection*{Step 7: descent of the strong coordinate to the base ring}\label{subsec:descent-coordinate}

It remains to prove that \(B\), which was constructed over \(K\), has
coefficients in \(R\). Let \(\widetilde D\) be the generator after the
coordinate change \(X=B(t)\). The change-of-coordinate identity
\eqref{eq:generator-coordinate-change} was obtained by taking the coefficient
of \(y\) in the translation identity \eqref{eq:translation-coordinate-change}.
By the chain rule, the coefficient of the highest derivative
\(f^{(m)}(B(t))\) in \(D(f\circ B)(t)\) is \(a_m(t)B'(t)^m\).
Hence
\begin{equation}\label{eq:top-transform}
 \widetilde a_m(B(t))=a_m(t)B'(t)^m.
\end{equation}

By \eqref{eq:classification-over-K}, the transformed law is the \(d\)-fold
diagonal of \(\mathbb G_m\).  Its normalized translation, and hence its
generator, is the same as that of \(\mathbb G_m\).  Therefore
\eqref{eq:cyclic-top} gives
\[
 \widetilde a_m(X)=\frac{m^m}{m!}X^{m-1}.
\]
Since \(B(t)=t+O(t^2)\) and \(B'(t)=1+O(t)\), comparison of the coefficient of
\(t^{m-1}\) in \eqref{eq:top-transform} shows that the constant \(c\) in
\eqref{eq:am-A} is \(m^m/m!\).  The original generator is defined over \(R\),
so \eqref{eq:vanishing-ai} now yields
\[
 A(t)=\frac{m!}{m^m}\frac{a_m(t)}{t^{m-1}}
 \in1+tR[[t]].
\]

Write \(A(t)^{-1/m}=\sum_{k\ge0}g_kt^k\).  The unique solution of
\eqref{eq:H-general} is
\[
 H(t)=\sum_{k\ge0}\frac{g_k}{1+mk}t^k.
\]
All denominators are invertible in the \(\mathbb Q\)-algebra \(R\); hence
\(H\in1+tR[[t]]\), \(B=tH^m\in t+t^2R[[t]]\), and its compositional inverse
also lies in \(R[[t]]\).  Identity \eqref{eq:classification-over-K} is
therefore an identity over \(R\), and it is precisely the strong-isomorphism
identity in \cref{thm:cyclicity}.

Finally, \(B'(t)\) is a unit, so \eqref{eq:top-transform} shows that a
strong change of coordinate preserves the differential order of the generator.
The generator of \(p_r^{\,n/r}\) has rank \(r\); thus \(m\) is unique.  Equation
\eqref{eq:n-md} shows that it is also the number of distinct generic roots.
The roots of \(p_m\) are pairwise distinct generically, so \(p_m^d\) has
zero discriminant exactly when \(d>1\), or equivalently \(m<n\).  This
completes the proof of \cref{thm:cyclicity}. \qed

\section{On weak and strong units}\label{sec:weak-units}

\Cref{thm:algebraic-commutativity} depends on the full multiplicity of the identity
law. A weaker notion is nevertheless natural for algebraic \(n\)-valued
monoids.

\begin{definition}\label[definition]{def:weak-unit}
Let \(X\) be an algebraic variety carrying an associative rational
\(n\)-valued multiplication. An element \(e\in X\) is a \emph{weak unit}
if the multiplication is defined at the generic points of
\(\{e\}\times X\) and \(X\times\{e\}\), and its restrictions satisfy
\[
 x\in e*x
 \qquad\text{and}\qquad
 x\in x*e
\]
for generic \(x\). It is a \emph{strong unit} if
\[
 e*x=x*e=\multiset{x,\ldots,x}.
\]
The variety \(X\) is called an \emph{algebraic \(n\)-valued monoid with a weak
unit} or an \emph{algebraic \(n\)-valued monoid with a strong unit},
respectively.
\end{definition}

Thus every strong unit is weak, while a weak unit records occurrence without
prescribing the remaining values or their multiplicities.

\begin{example}
For simplicity, work over \(\mathbb C\), or over any algebraically closed field of characteristic zero. The Fermat curve gives a basic family. For \(n\geq2\), the polynomial
\[
 P(z;x,y)=z^n-x^n-y^n
\]
defines the commutative associative law (with an operation \(\ast_{\mathbb F_n}\))
\begin{equation}\label{eq:fermat-law}
 x*_{\mathbb F_n}y=
 \multiset{
 \zeta_n^k\sqrt[n]{x^n+y^n}
 \mathbin{|} k=0,\ldots,n-1}
\end{equation}
on \(\mathbb A^1\), where \(\zeta_n = e^{2\pi i/n}\). Both bracketings of three elements are the
roots of \(z^n=x^n+y^n+w^n\), each with the same multiplicity. Moreover,
\[
 0*_{\mathbb F_n}y=y*_{\mathbb F_n}0
 =\multiset{\zeta_n^k y\mathbin{|}k=0,\ldots,n-1}.
\]
Hence \(0\) is a weak unit, but it is not strong when \(n>1\).
\end{example}

There is a general construction behind this example. If \(T\) is a complex
polynomial of degree \(m\), let \(T^{-1}[u]\) denote the \(m\)-multiset of
roots of \(T(z)=u\), counted with multiplicity.

\begin{proposition}[Polynomial pullback]\label[proposition]{prop:polynomial-pullback}
Let \(X=\mathbb A^1_{\mathbb C}\) carry an algebraic \(n\)-valued monoid with
strong unit \(0\), represented by the roots of \(P(z;x,y)\). Let
\(T\in\mathbb C[z]\) have degree \(m\geq2\) and satisfy \(T(0)=0\). Then
\begin{equation}\label{eq:polynomial-pullback}
 a\circ_T b
 =\bigsqcup_{u\in T(a)*T(b)}T^{-1}[u]
 =\multiset{z\mathbin{|}P(T(z);T(a),T(b))=0}
\end{equation}
defines an algebraic \(mn\)-valued monoid with weak unit \(0\). If
the law on \(X\) is commutative, then so is its pullback.
\end{proposition}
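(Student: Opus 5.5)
The plan is to verify the three properties directly from the defining polynomial
\[
P_T(z;a,b)=P\bigl(T(z);T(a),T(b)\bigr),
\]
which is a polynomial in \(z\) of degree \(mn\) with leading coefficient a nonzero multiple of the leading coefficient of \(P\). It therefore defines a rational map \(\mathbb A^1\times\mathbb A^1\dashrightarrow\Sym^{mn}(\mathbb A^1)\). The equality of the two descriptions in \eqref{eq:polynomial-pullback} is just the statement that the roots of \(P(T(z);\cdot,\cdot)\) are the fibres \(T^{-1}[u]\) over the roots \(u\) of \(P(\,\cdot\,;T(a),T(b))\). Multiplicities match because
\[
P(T(z))=\prod_u\bigl(T(z)-u\bigr)^{\mult u}
\]
and each factor \(T(z)-u\) has exactly the multiset \(T^{-1}[u]\) of roots.

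\textbf{Associativity.} Compute \((a\circ_T b)\circ_T c\) as a flattened \(m^2n^2\)-multiset. For each \(w\in a\circ_T b\), the product \(w\circ_T c\) depends on \(w\) only through \(T(w)\). Each \(u\in T(a)*T(b)\) contributes \(m\) points \(w\in T^{-1}[u]\), all with \(T(w)=u\). Hence
\[
(a\circ_T b)\circ_T c \;=\; m\cdot \bigsqcup_{u\in T(a)*T(b)}\ \bigsqcup_{v\in u*T(c)} T^{-1}[v]
\;=\; m\cdot T^{-1}\bigl[(T(a)*T(b))*T(c)\bigr],
\]
where \(T^{-1}\) is applied to a multiset elementwise and \(m\cdot\) means every multiplicity is multiplied by \(m\). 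The same computation gives
\[
a\circ_T(b\circ_T c)=m\cdot T^{-1}\bigl[T(a)*(T(b)*T(c))\bigr].
\]
Associativity of the original law then yields equality. In polynomial language, both sides are \(\Res\)-type expressions obtained from the associativity identity \eqref{eq:formal-associativity-resultant} by substituting \(T\), and I would phrase it that way to stay inside rational maps at generic points.

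\textbf{Weak unit.} From \(P(Z;0,y)=p_0(0,y)(Z-y)^n\) and \(T(0)=0\) we get
\[
P_T(z;0,b)\propto \bigl(T(z)-T(b)\bigr)^n .
\]
So \(0\circ_T b=n\cdot T^{-1}[T(b)]\), which contains \(b\), and symmetrically \(b\circ_T 0\ni b\). The map is defined at the generic points of the unit divisors because \(P\) is. The unit is not strong: for generic \(b\) the fibre \(T^{-1}[T(b)]\) has \(m\ge2\) distinct points, since \(T'\) has finitely many zeros.

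\textbf{Commutativity.} This is immediate: if \(P(Z;x,y)=P(Z;y,x)\), then \(P_T(z;a,b)=P_T(z;b,a)\).

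The only delicate point I expect is the bookkeeping of the factor \(m\) in associativity, together with making sure that the generic definedness of the composed rational maps is preserved. This holds because \(T\) is a finite dominant morphism, so generic points map to generic points.
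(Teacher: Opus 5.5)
Your proposal is correct and follows the paper's proof essentially step for step. It uses the same fibrewise count producing the factor \(m\) in both bracketings, so that each equals \(m\cdot T^{-1}\) of the corresponding \(n^2\)-multiset on \(X\); the same computation \(0\circ_T b=b\circ_T 0=n\cdot T^{-1}[T(b)]\ni b\) for the weak unit; and commutativity from the symmetry of \(P(T(z);T(a),T(b))\).

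Your extra checks are correct: the multiplicities match in the two descriptions in \eqref{eq:polynomial-pullback}, and the unit is not strong. If you use the resultant phrasing, note that the pulled-back resultant is, up to a nonzero constant, the \(m\)-th power of the original one evaluated at \((T(a),T(b),T(c))\) with \(W\) replaced by \(T(W)\); this is exactly your factor \(m\), rather than a literal substitution into \eqref{eq:formal-associativity-resultant}.
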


\begin{proof}
Each value \(u\in T(a)*T(b)\) has \(m\) preimages under \(T\). On iterating
\eqref{eq:polynomial-pullback}, every value is therefore repeated once for
each preimage of the intermediate point. With multiplicities understood,
\[
 (a\circ_T b)\circ_T c
 =m\cdot\bigsqcup_{w\in (T(a)*T(b))*T(c)}T^{-1}[w]
\]
and
\[
 a\circ_T(b\circ_T c)
 =m\cdot\bigsqcup_{w\in T(a)*(T(b)*T(c))}T^{-1}[w].
\]
The two \(m^2n^2\)-multisets agree by associativity on \(X\). Since \(0\) is
a strong unit on \(X\),
\[
 0\circ_T y=y\circ_T0=n\cdot T^{-1}[T(y)].
\]
This multiset contains \(y\), so \(0\) is a weak unit. Commutativity follows
directly from the symmetry of \eqref{eq:polynomial-pullback} in \(a,b\).
\end{proof}

\begin{example}
The Fermat law \eqref{eq:fermat-law} is the polynomial pullback of ordinary
addition by \(T(z)=z^n\).
\end{example}

The elementary \(n\)-valued group \(\mathbb G_n\) on \(\mathbb A^1\) is
\begin{equation}\label{eq:elementary-law}
 x*_{\mathbb G_n}y
 =\multiset{
 \left(\sqrt[n]{x}+\zeta_n^k\sqrt[n]{y}\right)^n
 \mathbin{|} k=0,\ldots,n-1}.
\end{equation}

Taking \(T(z)=z^m\) in
\cref{prop:polynomial-pullback} gives the
\(mn\)-valued law \(\mathbb G_{m,n}\), defined intrinsically by
\begin{equation}\label{eq:Gmn-polynomial}
 p_n(z^m;x^m,y^m)=0.
\end{equation}
After compatible choices of radicals, its branches may be displayed as
\begin{equation}\label{eq:Gmn-law}
 x*_{\mathbb G_{m,n}}y
 =\multiset{
 \zeta_m^\ell
 \left(\sqrt[n]{x^m}+\zeta_n^k\sqrt[n]{y^m}\right)^{n/m}
 \mathbin{|}
 \substack{k=0,\ldots,n-1\\ \ell=0,\ldots,m-1}},\quad \zeta_m = e^{2\pi i/m}.
\end{equation}
In particular, \(m=n=2\) yields
\[
 x*_{\mathbb G_{2,2}}y=\multiset{\pm x\pm y},
\]
whose four values are the roots of
\[
 z^4+x^4+y^4-2x^2y^2-2x^2z^2-2y^2z^2=0.
\]

The Fermat laws \(\mathbb F_n\) and the pullback laws \(\mathbb G_{n,n-1}\)
are related by the
projective geometry of their defining plane curves. The relevant statement
may be formulated as follows.

\begin{theorem}[Projective duality]\label{thm:weak-unit-duality}
Let \([u:v:w]\) denote the dual coordinates representing the line
\(ux+vy-wz=0\). For \(n\ge2\),
\[
 \{z^n-x^n-y^n=0\}^{\vee}
 =\{p_{n-1}(w^n;u^n,v^n)=0\}.
\]
The curve on the right encodes the \(n(n-1)\)-valued pullback law
\(\mathbb G_{n,n-1}\).
\end{theorem}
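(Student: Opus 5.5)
Since \(p_{n-1}(W;U,V)\) has \(W\)-roots \(\bigl(U^{1/(n-1)}+\zeta_{n-1}^jV^{1/(n-1)}\bigr)^{n-1}\), we will show that the dual curve \(C^\vee\) of the Fermat curve \(C=\{z^n=x^n+y^n\}\) is parametrized by the Gauss map, and that its image satisfies exactly one of the factor relations \(w^n=(u^{n/(n-1)}+\zeta v^{n/(n-1)})^{n-1}\). Afterwards we compare degrees and irreducibility. Concretely, at a smooth point \([x:y:z]\in C\) the tangent line is \(x^{n-1}X+y^{n-1}Y-z^{n-1}Z=0\). In the paper's convention \(ux+vy-wz=0\), this gives the Gauss map
\[
[u:v:w]=[x^{n-1}:y^{n-1}:z^{n-1}].
\]

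The first step is the forward inclusion. Put \(\alpha=u^{1/(n-1)}=x\), \(\beta=v^{1/(n-1)}=y\) and \(\gamma=w^{1/(n-1)}=z\), up to \((n-1)\)-st roots of unity. Then \(x^n=u\alpha\), \(y^n=v\beta\), \(z^n=w\gamma\), and the curve equation becomes \(\gamma^n=\alpha^n+\beta^n\), where now \(\alpha^{n-1}=u\), etc. We raise this to the form of a \(p_{n-1}\) root. Set \(U=u^n\), \(V=v^n\), \(W=w^n\), so that \(U^{1/(n-1)}=\alpha^n\), \(V^{1/(n-1)}=\beta^n\) and \(W^{1/(n-1)}=\gamma^n\), all up to roots of unity. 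The relation \(\gamma^n=\alpha^n+\beta^n\) then says that \(W^{1/(n-1)}=U^{1/(n-1)}+V^{1/(n-1)}\) for suitable branch choices. Raising to the power \(n-1\) shows that \(W\) is one of the roots \((U^{1/(n-1)}+\zeta^j V^{1/(n-1)})^{n-1}\). Hence \(p_{n-1}(w^n;u^n,v^n)=0\) on the image of the Gauss map. This relies on the fact that \(p_{n-1}\) is the norm over all branch choices, as in \eqref{eq:pn}, so it is independent of which roots we pick. Closure then gives \(C^\vee\subseteq\{p_{n-1}(w^n;u^n,v^n)=0\}\).

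The second step is equality. Since \(C\) is smooth of degree \(n\), the Plücker formula gives \(\deg C^\vee=n(n-1)\). The polynomial \(p_{n-1}(W;U,V)\) is homogeneous of degree \(n-1\) in \(W,U,V\) (it is elementary). So \(p_{n-1}(w^n;u^n,v^n)\) is a form of degree \(n(n-1)\), which equals \(\deg C^\vee\). Equality of the zero loci therefore reduces to showing that this form is irreducible, or at least a power of an irreducible form; being reduced it must then equal the defining form of \(C^\vee\) up to a scalar.

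This irreducibility is the main obstacle. My first attempt would be a monodromy/Galois argument. Over \(\mathbb C(u,v)\), its roots in \(w\) are
\[
w=\epsilon\,(\alpha^n+\zeta^j\beta^n)^{1/n},
\]
with \(\alpha^{n-1}=u\) and \(\beta^{n-1}=v\). These are exactly the images of the \(n(n-1)\) tangent lines to \(C\) through a generic point of the dual pencil. Since \(C\) is irreducible, \(C^\vee\) is irreducible, so all the geometric roots are Galois conjugate. To make this rigorous without assuming irreducibility, I would argue directly that a generic line \(\{u=u_0\}\) (the lines through a fixed point of the dual plane) meets \(C^\vee\) in \(n(n-1)\) distinct points, namely the tangents from a generic point to \(C\), and that each of these satisfies the form. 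The form also has exactly \(n(n-1)\) roots on that line, which are distinct because the branch values are generically pairwise distinct. Hence the two divisors on a generic line coincide, and the plane curves coincide. The last sentence of the theorem then follows from \eqref{eq:Gmn-polynomial} with \(m=n\): the equation \(p_{n-1}(z^n;x^n,y^n)=0\) is precisely the defining polynomial of \(\mathbb G_{n,n-1}\).
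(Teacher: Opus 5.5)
Your proposal is correct and follows the paper's route. The paper also reads off the Gauss map \([u:v:w]=[x^{n-1}:y^{n-1}:z^{n-1}]\) and eliminates to obtain \(p_{n-1}(w^n;u^n,v^n)=0\), deferring the full equality to the cited Buchstaber--Kornev theorem; your class-formula count \(\deg C^\vee=n(n-1)\) supplies that equality. The irreducibility you flag as the main obstacle is in fact automatic: the nonzero form \(p_{n-1}(w^n;u^n,v^n)\) vanishes on the irreducible curve \(C^\vee\), so the defining form of \(C^\vee\) divides it, and both have degree \(n(n-1)\), so they agree up to a scalar. There is also a small slip: the \(w\)-roots are \(\epsilon(\alpha^n+\zeta^j\beta^n)^{(n-1)/n}\), not \(\epsilon(\alpha^n+\zeta^j\beta^n)^{1/n}\).
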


See \parencite[Theorem~8]{BuchstaberKornev2025CP1}. With the stated dual
coordinates, the tangent line at \([x:y:z]\) has coordinates
\([x^{n-1}:y^{n-1}:z^{n-1}]\), which gives the displayed polynomial equation.

\printbibliography[heading=bibintoc]
\bigskip

\noindent\begin{minipage}{\linewidth}
\noindent
{\itshape Mikhail Kornev}\\
HSE University\\
Email: \texttt{mkorneff@mi-ras.ru}

\end{minipage}

\end{document}